\def\RSthmtxt{theorem~}\newref{thm}{name = \RSthmtxt}}
\def\RSlemtxt{lemma~}\newref{lem}{name = \RSlemtxt}}
\numberwithin{equation}{section}
\numberwithin{figure}{section}
\numberwithin{table}{section}
\theoremstyle{plain}
\newtheorem{thm}{\protect\theoremname}[section]
  \theoremstyle{definition}
  \newtheorem{defn}[thm]{\protect\definitionname}
  \theoremstyle{remark}
  \newtheorem{rem}[thm]{\protect\remarkname}
  \theoremstyle{plain}
  \newtheorem{lem}[thm]{\protect\lemmaname}
  \theoremstyle{plain}
  \newtheorem*{question*}{\protect\questionname}
  \theoremstyle{definition}
  \newtheorem{example}[thm]{\protect\examplename}
  \theoremstyle{plain}
  \newtheorem{cor}[thm]{\protect\corollaryname}
  \theoremstyle{plain}
  \newtheorem{question}[thm]{\protect\questionname}
  \theoremstyle{remark}
  \newtheorem*{acknowledgement*}{\protect\acknowledgementname}
\providecommand{\MR}[1]{}
\setlist[enumerate]{itemsep=5pt,topsep=3pt}
\setlist[enumerate,1]{label=\textup{(}\roman*\textup{)},ref=\roman*}
\setlist[enumerate,2]{label=\textup{(}\alph*\textup{)},ref=\theenumi \alph*}
  \providecommand{\acknowledgementname}{Acknowledgement}
  \providecommand{\corollaryname}{Corollary}
  \providecommand{\definitionname}{Definition}
  \providecommand{\examplename}{Example}
  \providecommand{\lemmaname}{Lemma}
  \providecommand{\questionname}{Question}
  \providecommand{\remarkname}{Remark}
\providecommand{\theoremname}{Theorem}
\begin{document}

\title[Sampling with positive definite kernels]{Sampling with positive definite kernels and an associated dichotomy}

\author{Palle Jorgensen}

\address{(Palle E.T. Jorgensen) Department of Mathematics, The University
of Iowa, Iowa City, IA 52242-1419, U.S.A. }

\email{palle-jorgensen@uiowa.edu}

\urladdr{http://www.math.uiowa.edu/\textasciitilde{}jorgen/}

\author{Feng Tian}

\address{(Feng Tian) Department of Mathematics, Hampton University, Hampton,
VA 23668, U.S.A.}

\email{feng.tian@hamptonu.edu}
\begin{abstract}
We study classes of reproducing kernels $K$ on general domains; these
are kernels which arise commonly in machine learning models; models
based on certain families of reproducing kernel Hilbert spaces. They
are the positive definite kernels $K$ with the property that there
are countable discrete sample-subsets $S$; i.e., proper subsets $S$
having the property that every function in $\mathscr{H}\left(K\right)$
admits an $S$-sample representation. We give a characterizations
of kernels which admit such non-trivial countable discrete sample-sets.
A number of applications and concrete kernels are given in the second
half of the paper.
\end{abstract}

\subjclass[2000]{Primary 47L60, 46N30, 46N50, 42C15, 65R10, 05C50, 05C75, 31C20; Secondary
46N20, 22E70, 31A15, 58J65, 81S25}

\keywords{Reproducing kernel Hilbert space, frames, analysis/synthesis, discrete
analysis, interpolation, reconstruction, Gaussian free fields, distribution
of point-masses, discrete Green's function, non-uniform sampling,
optimization, covariance.}

\maketitle
\tableofcontents{}

\section{Introduction}

In the theory of non-uniform sampling, one studies Hilbert spaces
consisting of signals, understood in a very general sense. One then
develops analytic tools and algorithms, allowing one to draw inference
for an ``entire'' (or global) signal from partial information obtained
from carefully chosen distributions of sample points. While the better
known and classical sampling algorithms (Shannon and others) are based
on interpolation, modern theories go beyond this. An early motivation
is the work of Henry Landau, see e.g., \cite{MR0129065,MR0140733,MR0206615,MR0222554,doi:10.1137/0144089,MR799420}.
In this setting, it is possible to make precise the notion of ``average
sampling rates'' in general configurations of sample points. (See
also \cite{MR2587581,MR2868037}.)

When a positive definite kernel $K$ is given, we denote by $\mathscr{H}\left(K\right)$
the associated reproducing kernel Hilbert space (RKHS). In the present
paper we study classes of reproducing kernels $K$ on general domains,
such kernels arise commonly in machine learning models based on reproducing
kernel Hilbert space (see e.g., \cite{MR3450534}) with the property
that there are non-trivial restrictions to countable discrete sample
subsets $S$ such that every function in $\mathscr{H}\left(K\right)$
has an $S$-sample representation. In this general framework, we study
properties of positive definite kernels $K$ with respect to sampling
from ``small\textquotedblright{} subsets, and applying to all functions
in the associated Hilbert space $\mathscr{H}\left(K\right)$. We are
motivated by concrete kernels which are used in a number of applications,
for example, on one extreme, the Shannon kernel for band-limited functions,
which admits many sampling realizations; and on the other, the covariance
kernel of Brownian motion which has no non-trivial countable discrete
sample subsets.

We offer an operator theoretic condition which explains, in a general
context, this dichotomy. Our study continues our earlier papers on
reproducing kernels and their restrictions to countable discrete subsets;
see e.g., \cite{zbMATH06664785,MR3402823,MR3450534,2015arXiv150202549J},
and also \cite{MR2810909,MR2591839,MR2488871,MR2228737,MR2186447}.

A reproducing kernel Hilbert space (RKHS) is a Hilbert space $\mathscr{H}$
of functions on a prescribed set, say $T$, with the property that
point-evaluation for functions $f\in\mathscr{H}$ is continuous with
respect to the $\mathscr{H}$-norm. They are called kernel spaces,
because, for every $t\in T$, the point-evaluation for functions $f\in\mathscr{H}$,
$f\left(t\right)$ must then be given as a $\mathscr{H}$-inner product
of $f$ and a vector $K_{t}$, in $\mathscr{H}$; called the kernel.

The RKHSs have been studied extensively since the pioneering papers
by Aronszajn \cite{Aro43,Aro48}. They further play an important role
in the theory of partial differential operators (PDO); for example
as Green's functions of second order elliptic PDOs \cite{Nel57,HKL14}.
Other applications include engineering, physics, machine-learning
theory \cite{KH11,MR2488871,CS02}, stochastic processes \cite{AD93,ABDdS93,AD92,AJSV13,MR3251728},
numerical analysis, and more \cite{MR2089140,MR2607639,MR2913695,MR2975345,MR3091062,MR3101840,MR3201917,Shawe-TaylorCristianini200406,SchlkopfSmola200112}. 

An illustration from \emph{neural networks}: An Extreme Learning Machine
(ELM) is a neural network configuration in which a hidden layer of
weights are randomly sampled \cite{RW06}, and the object is then
to determine analytically resulting output layer weights. Hence ELM
may be thought of as an approximation to a network with infinite number
of hidden units.

Given a positive definite kernel $K:T\times T\rightarrow\mathbb{C}$
(or $\mathbb{R}$ for simplification), there are several notions and
approaches to sampling (i.e., an algorithmic reconstruction of suitable
functions from values at a fixed and pre-selected set of sample-points):
\begin{defn}
We say that $K$ has non-trivial sampling property, if there exists
a countable subset $S\subset T$, and $a,b\in\mathbb{R}_{+}$, such
that 
\begin{equation}
a\sum_{s\in S}\left|f\left(s\right)\right|^{2}\leq\left\Vert f\right\Vert _{\mathscr{H}\left(K\right)}^{2}\leq b\sum_{s\in S}\left|f\left(s\right)\right|^{2},\quad\forall f\in\mathscr{H}\left(K\right),\label{eq:sp1}
\end{equation}
where $\mathscr{H}\left(K\right)$ is the reproducing kernel Hilbert
space (RKHS) of $K$, see \cite{Aro43} and \remref{rk} below.

Suppose equality holds in (\ref{eq:sp1}) with $a=b=1$; then we say
that $\left\{ K\left(\cdot,s\right)\right\} _{s\in S}$ is a \emph{Parseval
frame}. 

It follows that sampling holds in the form
\[
f\left(t\right)=\sum_{s\in S}f\left(s\right)K\left(t,s\right),\quad\forall f\in\mathscr{H}\left(K\right),\:\forall t\in T
\]
if and only if $\left\{ K\left(\cdot,s\right)\right\} _{s\in S}$
is a Parseval frame; see also \thmref{ps}. 
\end{defn}

As is well known, when a vector $f$ in a Hilbert space $\mathscr{H}$
is expanded in an orthonormal basis (ONB) $B$, there is then automatically
an associated Parseval identity. In physical terms, this identity
typically reflects a \emph{stability} feature of a decomposition based
on the chosen ONB $B$. Specifically, Parseval's identity reflects
a conserved quantity for a problem at hand, for example, energy conservation
in quantum mechanics.

The theory of frames begins with the observation that there are useful
vector systems which are in fact not ONBs but for which a Parseval
formula still holds. In fact, in applications it is important to go
beyond ONBs. While this viewpoint originated in signal processing
(in connection with frequency bands, aliasing, and filters), the subject
of frames appears now to be of independent interest in mathematics.
See, e.g., \cite{MR2837145,MR3167899,MR2367342,MR2147063}, and also
\cite{CoDa93,MR2154344,Dutkay_2006}. 

\begin{rem}
\label{rem:rk}To make the discussion self-contained, we add the following
(for the benefit of the readers.)
\begin{enumerate}
\item A given $K:T\times T\rightarrow\mathbb{C}$ is positive definite (p.d.)
if and only if for all $n\in\mathbb{N}$, $\left\{ \xi\right\} _{j=1}^{n}\subset\mathbb{C}$,
and all $\left\{ t_{j}\right\} _{j=1}^{n}\subset T$, we have: 
\[
\sum_{i}\sum_{j}\overline{\xi}_{i}\xi_{j}K\left(t_{i},t_{j}\right)\geq0.
\]
\item \label{enu:rh2}A function $f$ on $T$ is in $\mathscr{H}\left(K\right)$
if and only if there is a constant $C=C\left(f\right)$ such that
for all $n$, $\left(\xi_{j}\right)_{1}^{n}$, $\left(t_{j}\right)_{1}^{n}$,
as above, we have 
\begin{equation}
\left|\sum_{1}^{n}\xi_{j}f\left(t_{j}\right)\right|^{2}\leq C\sum_{i}\sum_{j}\overline{\xi}_{i}\xi_{j}K\left(t_{i},t_{j}\right).\label{eq:rh2}
\end{equation}
\end{enumerate}
It follows from the above that reproducing kernel Hilbert spaces (RKHS)
arise from a given positive definite kernel $K$, a corresponding
pre-Hilbert form; and then a Hilbert-completion. The question arises:
\textquotedblleft What are the functions in the completion?\textquotedblright{}
The \emph{a priori} estimate (\ref{eq:rh2}) in (\ref{enu:rh2}) above
is an answer to the question. We will return to this issue in the
application section 3 below. By contrast, the Hilbert space completions
are subtle; they are classical Hilbert spaces of functions, not always
transparent from the naked kernel $K$ itself. Examples of classical
RKHSs: Hardy spaces or Bergman spaces (for complex domains), Sobolev
spaces and Dirichlet spaces \cite{MR3054607,MR2892621,MR2764237}
(for real domains, or for fractals), band-limited $L^{2}$ functions
(from signal analysis), and Cameron-Martin Hilbert spaces (see \lemref{cm})
from Gaussian processes (in continuous time domain).
\end{rem}

\begin{lem}
\label{lem:fr}Suppose $K$, $T$, $a$, $b$, and $S$ satisfy the
condition in (\ref{eq:sp1}), then there is a positive operator $B$
in $\mathscr{H}\left(K\right)$ with bounded inverse such that 
\[
f\left(\cdot\right)=\sum_{s\in S}\left(Bf\right)\left(s\right)K\left(\cdot,s\right)
\]
is a convergent interpolation formula valid for all $f\in\mathscr{H}\left(K\right)$. 

Equivalently, 
\[
f\left(t\right)=\sum_{s\in S}f\left(s\right)B\left(K\left(\cdot,s\right)\right)\left(t\right),\;\text{for all \ensuremath{t\in T}.}
\]
\end{lem}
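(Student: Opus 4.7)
The plan is to recognize the hypothesis \eqref{eq:sp1} as the statement that the family $\{K(\cdot,s)\}_{s\in S}$ is a frame in $\mathscr{H}(K)$, and then invoke the standard frame-reconstruction formula via the canonical dual frame. The bridge is the reproducing property $f(s)=\langle f,K(\cdot,s)\rangle_{\mathscr{H}(K)}$, which converts \eqref{eq:sp1} into
\[
\tfrac{1}{b}\,\|f\|_{\mathscr{H}(K)}^{2}\;\leq\;\sum_{s\in S}\bigl|\langle f,K(\cdot,s)\rangle_{\mathscr{H}(K)}\bigr|^{2}\;\leq\;\tfrac{1}{a}\,\|f\|_{\mathscr{H}(K)}^{2},\qquad f\in\mathscr{H}(K).
\]
Hence $\{K(\cdot,s)\}_{s\in S}$ is a frame with bounds $1/b$ and $1/a$.

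Next, I would introduce the frame (analysis/synthesis) operator
\[
S_{F}f\;:=\;\sum_{s\in S}\langle f,K(\cdot,s)\rangle_{\mathscr{H}(K)}\,K(\cdot,s)\;=\;\sum_{s\in S}f(s)\,K(\cdot,s),
\]
with the series converging unconditionally in $\mathscr{H}(K)$; this follows from the upper frame bound via a routine Cauchy-test on partial sums. A short computation using the reproducing property gives $\langle S_F f,f\rangle=\sum_s|f(s)|^2$, so by \eqref{eq:sp1},
\[
\tfrac{1}{b}\,I_{\mathscr{H}(K)}\;\leq\;S_{F}\;\leq\;\tfrac{1}{a}\,I_{\mathscr{H}(K)},
\]
which makes $S_F$ a positive, bounded, self-adjoint, boundedly invertible operator on $\mathscr{H}(K)$.

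Now I would set $B:=S_{F}^{-1}$; by the spectral bounds above, $B$ is positive with $a\,I\leq B\leq b\,I$, hence boundedly invertible. The reconstruction then follows from $f=S_F B f=B S_F f$ in two equivalent ways: writing $f=S_F(Bf)$ and expanding $S_F$ yields
\[
f(\cdot)\;=\;\sum_{s\in S}(Bf)(s)\,K(\cdot,s),
\]
while writing $f=B(S_F f)$ and pushing the (bounded linear) operator $B$ through the convergent sum yields
\[
f(t)\;=\;\sum_{s\in S}f(s)\,\bigl(B K(\cdot,s)\bigr)(t),\qquad t\in T,
\]
where the pointwise evaluation in $t$ is legitimate because $\mathscr{H}(K)$-convergence implies pointwise convergence (every point-evaluation functional is bounded).

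The only genuine subtlety is the unconditional convergence of the two series in $\mathscr{H}(K)$ (and the interchange of $B$ with the infinite sum in the second formula); both reduce to the Bessel-type inequality coming from the upper bound in \eqref{eq:sp1}, together with the continuity of $B$. Everything else is the classical frame-duality argument applied to the particular frame $\{K(\cdot,s)\}_{s\in S}$ inside $\mathscr{H}(K)$.
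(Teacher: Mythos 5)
Your argument is correct and is essentially the paper's own proof: the frame operator $S_F$ you construct is exactly the composition $A^*A$ of the analysis operator $(Af)(s)=f(s)$ with its adjoint (the synthesis operator), and both you and the paper take $B=(A^*A)^{-1}$ and read off the two reconstruction formulas from $f=S_F Bf=BS_Ff$. Your spectral bounds $aI\leq B\leq bI$ are in fact sharper (and more accurate) than the bound $\left\Vert B\right\Vert \leq a^{-1}$ stated in the paper, and your attention to unconditional convergence and to pushing $B$ through the sum fills in details the paper leaves implicit.
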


\begin{proof}
Define $A:\mathscr{H}\left(K\right)\rightarrow l^{2}\left(S\right)$
by $\left(Af\right)\left(s\right)=f\left(s\right)$, $s\in S$; or
\[
Af:=\left(f\left(s\right)\right)_{s\in S}\in l^{2}\left(S\right).
\]
Then the adjoint operator $A^{*}:l^{2}\left(S\right)\rightarrow\mathscr{H}\left(K\right)$
is given by 
\[
A^{*}\xi=\sum_{s\in S}\xi_{s}K\left(\cdot,s\right),\;\forall\xi\in l^{2}\left(S\right),
\]
and 
\[
A^{*}Af=\sum_{s\in S}f\left(s\right)K\left(\cdot,s\right)
\]
holds in $\mathscr{H}\left(K\right)$, with $\mathscr{H}\left(K\right)$-norm
convergence. This is immediate from (\ref{eq:sp1}).

Now set $B=\left(A^{*}A\right)^{-1}$. Note that 
\[
\left\Vert B\right\Vert _{\mathscr{H}\left(K\right)\rightarrow\mathscr{H}\left(K\right)}\leq a^{-1}
\]
where $a$ is in the lower bound in (\ref{eq:sp1}). 
\end{proof}
\begin{lem}
\label{lem:span}Suppose $K$, $T$, $a$, $b$, and $S$ satisfy
(\ref{eq:sp1}), then the linear span of $\left\{ K\left(\cdot,s\right)\right\} _{s\in S}$
is dense in $\mathscr{H}\left(K\right)$. 
\end{lem}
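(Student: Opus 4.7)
The plan is to show that the orthogonal complement of the linear span $M := \operatorname{span}\{K(\cdot,s) : s \in S\}$ in $\mathscr{H}(K)$ is trivial, which is equivalent to density of $M$. The key tool is the reproducing property combined with the upper frame bound in (\ref{eq:sp1}).

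First I would take an arbitrary $f \in \mathscr{H}(K)$ with $f \perp M$, i.e., $\langle f, K(\cdot,s)\rangle_{\mathscr{H}(K)} = 0$ for every $s \in S$. By the reproducing property of the kernel, this inner product equals $f(s)$, so $f(s) = 0$ for every $s \in S$. Now I would invoke the upper inequality in (\ref{eq:sp1}):
\[
\|f\|_{\mathscr{H}(K)}^{2} \;\leq\; b \sum_{s\in S} |f(s)|^{2} \;=\; 0,
\]
which forces $f \equiv 0$. Hence $M^{\perp} = \{0\}$ and so $\overline{M} = \mathscr{H}(K)$, as required.

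Alternatively, I could invoke \lemref{fr} directly: that lemma produces, for each $f \in \mathscr{H}(K)$, a norm-convergent expansion $f = \sum_{s\in S}(Bf)(s)\,K(\cdot,s)$, exhibiting $f$ as a limit in $\mathscr{H}(K)$ of finite linear combinations of the $K(\cdot,s)$. This immediately yields density of $M$. Both routes rely on exactly the same ingredients (the reproducing property and the upper sampling bound), so I do not expect any real obstacle; the argument is short and the only subtlety worth noting is that the lower bound in (\ref{eq:sp1}) plays no role for density — only the upper bound, which controls $\|f\|$ by the $\ell^{2}$-norm of samples, is needed.
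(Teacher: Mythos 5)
Your first argument is exactly the paper's proof: orthogonality to $\{K(\cdot,s)\}_{s\in S}$ forces $f(s)=0$ for all $s\in S$ by the reproducing property, and then the upper bound $b<\infty$ in (\ref{eq:sp1}) gives $f=0$. Your closing remark that only the upper bound matters matches the paper's own emphasis, so there is nothing to add.
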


\begin{proof}
Let $f\in\mathscr{H}\left(K\right)$, then 
\begin{gather*}
f\perp\left\{ K\left(\cdot,s\right)\right\} _{s\in S}\\
\Updownarrow\\
f\left(s\right)=\left\langle K\left(\cdot,s\right),f\right\rangle _{\mathscr{H}\left(K\right)}=0,\;\forall s\in S,
\end{gather*}
by the reproducing property in $\mathscr{H}\left(K\right)$. But by
(\ref{eq:sp1}), $b<\infty$, this implies that $f=0$ in $\mathscr{H}\left(K\right)$.
Hence the family $\left\{ K\left(\cdot,s\right)\right\} _{s\in S}$
has dense span. 
\end{proof}

\section{The dichotomy}

We now turn to dichotomy: (i) Existence of countably discrete sampling
sets vs (ii) non-existence. To help readers appreciate the nature
of the two classes, we begin with two examples, (i) Shannon\textquoteright s
kernel for band-limited functions, \exaref{shan}; and (ii) the covariance
kernel for standard Brownian motion, \thmref{bm}.
\begin{question*}
~
\begin{enumerate}
\item Given a positive definite kernel $K:T\times T\rightarrow\mathbb{R}$,
how to determine when there exist $S\subset T$, and $a,b\in\mathbb{R}_{+}$
such that (\ref{eq:sp1}) holds?
\item Given $K$, $T$ as above, how to determine if there is a countable
discrete subset $S\subset T$ such that 
\begin{equation}
\left\{ K\left(\cdot,s\right)\right\} _{s\in S}\label{eq:sp4}
\end{equation}
 has dense span in $\mathscr{H}\left(K\right)$? 
\end{enumerate}
\end{question*}
\begin{example}
\label{exa:shan}Let $T=\mathbb{R}$, and let $K:\mathbb{R}\times\mathbb{R}\rightarrow\mathbb{R}$
be the Shannon kernel, where 
\begin{align}
K\left(s,t\right) & :=\text{sinc}\,\pi\left(s-t\right)\nonumber \\
 & =\frac{\sin\pi\left(s-t\right)}{\pi\left(s-t\right)},\quad\forall s,t\in\mathbb{R}.\label{eq:sp5}
\end{align}

We may choose $S=\mathbb{Z}$, and then $\left\{ K\left(\cdot,n\right)\right\} _{n\in\mathbb{Z}}$
is even an orthonormal basis (ONB) in $\mathscr{H}\left(K\right)$,
but there are many other examples of countable discrete subsets $S\subset\mathbb{R}$
such that (\ref{eq:sp1}) holds for finite $a,b\in\mathbb{R}_{+}$. 

The RKHS of $K$ in (\ref{eq:sp5}) is the Hilbert space $\subset L^{2}\left(\mathbb{R}\right)$
consisting of all $f\in L^{2}\left(\mathbb{R}\right)$ such that $suppt(\hat{f})\subset\left[-\pi,\pi\right]$,
where ``suppt'' stands for support of the Fourier transform $\hat{f}$.
Note $\mathscr{H}\left(K\right)$ consists of functions on $\mathbb{R}$
which have entire analytic extensions to $\mathbb{C}$; see \cite{MR2039503,MR2040080,MR2975345,MR1976867}.
Using the above observations, we get 
\begin{align*}
f\left(t\right) & =\sum_{n\in\mathbb{Z}}f\left(n\right)K\left(t,n\right)\\
 & =\sum_{n\in\mathbb{Z}}f\left(n\right)\text{sinc}\,\pi\left(t-n\right),\quad\forall t\in\mathbb{R},\:\forall f\in\mathscr{H}\left(K\right).
\end{align*}
\end{example}

\begin{example}
Let $K$ be the covariant kernel of standard Brownian motion, with
$T:=[0,\infty)$, or $T:=[0,1)$. Then 
\begin{equation}
K\left(s,t\right):=s\wedge t=\min\left(s,t\right),\;\forall\left(s,t\right)\in T\times T.\label{eq:sp6}
\end{equation}
\end{example}

\begin{lem}
\label{lem:cm}Let $K$, $T$ be as in (\ref{eq:sp6}). Then $\mathscr{H}\left(K\right)$
consists of functions $f$ on $T$ such that $f$ has distribution
derivative $f'\in L^{2}\left(T,\lambda\right)$, i.e., $L^{2}$ with
respect to Lebesgue measure $\lambda$ on $T$, and 
\begin{equation}
\left\Vert f\right\Vert _{\mathscr{H}\left(K\right)}^{2}=\int_{T}\left|f'\left(x\right)\right|^{2}dx.\label{eq:sp8}
\end{equation}
\end{lem}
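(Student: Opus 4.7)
The plan is to explicitly construct a candidate Hilbert space $W$ and identify it with $\mathscr{H}(K)$ via the uniqueness of the RKHS. Let $W$ denote the space of absolutely continuous functions $f$ on $T$ with $f(0)=0$ and $f'\in L^{2}(T,\lambda)$, equipped with the inner product $\langle f,g\rangle_{W}:=\int_{T}f'(x)\overline{g'(x)}\,dx$. The map $f\mapsto f'$ is an isometric isomorphism from $W$ onto $L^{2}(T,\lambda)$ (with inverse $g\mapsto\int_{0}^{\cdot}g$), so $W$ is a Hilbert space; note also that the boundary condition $f(0)=0$ is implicitly forced on any $f\in\mathscr{H}(K)$, because $K(\cdot,0)\equiv0$ gives $f(0)=\langle K(\cdot,0),f\rangle_{\mathscr{H}(K)}=0$.

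The key step is to verify the reproducing property on $W$. For each $t\in T$, the kernel section satisfies
\[
K_{t}(s)=\min(s,t)=\int_{0}^{s}\mathbbm{1}_{[0,t)}(u)\,du,
\]
so $K_{t}\in W$ with distributional derivative $K_{t}'=\mathbbm{1}_{[0,t)}\in L^{2}(T,\lambda)$. Consequently, for any $f\in W$,
\[
\langle f,K_{t}\rangle_{W}=\int_{T}f'(x)\mathbbm{1}_{[0,t)}(x)\,dx=f(t)-f(0)=f(t).
\]
This is exactly the reproducing identity, so $W$ is an RKHS whose kernel is $K$.

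To finish, I would argue that $\mathrm{span}\{K_{t}:t\in T\}$ is dense in $W$. Under the isometry $f\mapsto f'$, this span maps onto $\mathrm{span}\{\mathbbm{1}_{[0,t)}:t\in T\}\subset L^{2}(T,\lambda)$; if $g\in L^{2}(T,\lambda)$ is orthogonal to every $\mathbbm{1}_{[0,t)}$, then $\int_{0}^{t}g(x)\,dx=0$ for all $t\in T$, so $g=0$ $\lambda$-a.e. By the standard uniqueness of the RKHS associated with a given positive definite kernel (the Hilbert completion of the span of kernel sections under the reproducing inner product is unique up to isometric isomorphism), $W$ equals $\mathscr{H}(K)$ isometrically, which yields the norm identity \eqref{eq:sp8}.

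There is no substantive obstacle here; the points requiring care are (i) making explicit the boundary condition $f(0)=0$ that the lemma leaves tacit, (ii) justifying that the distributional derivative makes sense under the integrability assumption so that $f$ is actually absolutely continuous, and (iii) the density argument above, which ultimately reduces to the fact that $t\mapsto\int_{0}^{t}g$ identically vanishing forces $g=0$ a.e.
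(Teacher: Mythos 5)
Your proof is correct and complete. Note that the paper itself offers no argument for this lemma---it simply declares the result well known and cites the literature---so the relevant comparison is with the standard proof, which is exactly what you give: exhibit the Cameron--Martin space $W$, check that $K_{t}(s)=\min(s,t)=\int_{0}^{s}\mathbbm{1}_{[0,t)}(u)\,du$ lies in $W$ with $\langle f,K_{t}\rangle_{W}=f(t)$, and invoke uniqueness of the reproducing kernel Hilbert space. Two small remarks. First, you are right to make explicit the boundary condition $f(0)=0$, which the statement of the lemma suppresses; without it the right-hand side of \eqref{eq:sp8} is only a seminorm, and your observation that $K(\cdot,0)\equiv0$ forces $f(0)=\langle K_{0},f\rangle_{\mathscr{H}(K)}=0$ is the correct justification. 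Second, your separate density computation, while valid, is not actually needed: once $W$ is known to be a Hilbert space of functions on $T$ in which $K$ reproduces point evaluations, any $f\in W$ orthogonal to every $K_{t}$ satisfies $f(t)=\langle K_{t},f\rangle_{W}=0$ for all $t$ and hence is the zero element, so density of $\mathrm{span}\{K_{t}:t\in T\}$ in $W$ is automatic, and the Moore--Aronszajn uniqueness theorem then identifies $W$ with $\mathscr{H}(K)$ isometrically.
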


\begin{proof}
This is well-known, see e.g., \cite{MR3450534,MR3402823,Hi80}.
\end{proof}
\begin{rem}[see also \subsecref{bm} below]
The significance of (\ref{eq:sp8}) for Brownian motion is as follows: 

Fix $T$, and set $L^{2}\left(T\right)=$ the $L^{2}$-space from
the restriction to $T$ of Lebesgue measure on $\mathbb{R}$. Pick
an ONB $\left\{ \psi_{k}\right\} $ in $L^{2}\left(T\right)$, for
example a Haar-Walsh orthonormal basis in $L^{2}\left(T\right)$.
Let $\left\{ Z_{k}\right\} $ be an i.i.d. (independent identically
distributed) $N\left(0,1\right)$ system, i.e., standard Gaussian
copies. Then 
\begin{equation}
B_{t}\left(\cdot\right)=\sum_{k}\left(\int_{0}^{t}\psi_{k}\left(s\right)ds\right)Z_{k}\left(\cdot\right)\label{eq:sb}
\end{equation}
is a realization of standard Brownian motion on $T$; in particular
we have 
\[
\mathbb{E}\left(B_{s}B_{t}\right)=s\wedge t=K\left(s,t\right),\;\forall\left(s,t\right)\in T\times T.
\]
See \figref{bm}.
\end{rem}

\begin{figure}[H]
\includegraphics[width=0.35\textwidth]{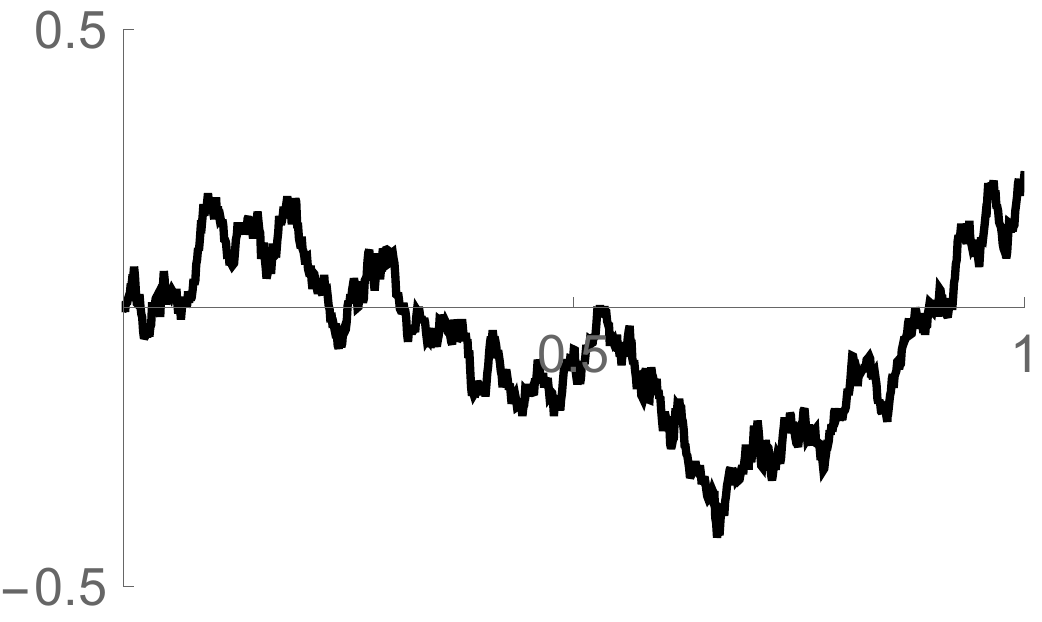}

\caption{\label{fig:bm}Brownian motion; see (\ref{eq:sb}).}

\end{figure}

\begin{thm}
\label{thm:bm}Let $K$, $T$ be as in (\ref{eq:sp6}); then there
is no countable discrete subset $S\subset T$ such that $\left\{ K\left(\cdot,s\right)\right\} _{s\in S}$
is dense in $\mathscr{H}\left(K\right)$. 
\end{thm}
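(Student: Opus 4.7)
The plan is to combine the Cameron--Martin realization of $\mathscr{H}(K)$ from \lemref{cm} with the reproducing property, in order to reduce the theorem to a transparent construction. By the reproducing property, $\langle f, K(\cdot, s)\rangle_{\mathscr{H}(K)} = f(s)$ for every $f \in \mathscr{H}(K)$, so the closed linear span of $\{K(\cdot, s)\}_{s \in S}$ equals all of $\mathscr{H}(K)$ if and only if the only $f \in \mathscr{H}(K)$ with $f|_{S} \equiv 0$ is $f = 0$. Thus the theorem is equivalent to producing, for every countable discrete $S \subset T$, a nonzero function in $\mathscr{H}(K)$ that vanishes at every point of $S$.

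To exhibit such a function I would invoke \lemref{cm}, which identifies $\mathscr{H}(K)$ with the space of absolutely continuous $f$ on $T$ satisfying $f(0) = 0$ and $f' \in L^{2}(T,\lambda)$. Since $K(\cdot, 0) \equiv 0$, one may assume $0 \notin S$. Fix any $s_{0} \in S$. Because $S$ is discrete in $T$, there is an open interval $U \subset T$ with $U \cap S = \{s_{0}\}$; inside $U$ I would then pick a short closed sub-interval $J$ strictly separated from both $s_{0}$ and $0$ (for instance, $J \subset U \cap (0, s_{0})$ bounded away from each endpoint). Let $\phi$ be any nonzero $C^{1}$ bump compactly supported in the interior of $J$, extended by zero to $T$. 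Then $\phi$ is absolutely continuous on $T$, $\phi(0)=0$, and $\phi' \in L^{2}(T,\lambda)$, so $\phi \in \mathscr{H}(K)$ by \lemref{cm}. By construction $J \cap S = \emptyset$, so $\phi(s) = 0$ for every $s \in S$, while $\phi \not\equiv 0$.

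The reproducing property then gives $\langle \phi, K(\cdot, s)\rangle_{\mathscr{H}(K)} = \phi(s) = 0$ for all $s \in S$, so the closed span of $\{K(\cdot, s)\}_{s \in S}$ is a proper subspace of $\mathscr{H}(K)$, which is exactly the conclusion. No deep analytic obstacle arises in this plan: the substantive input is really the Cameron--Martin description (\lemref{cm}), which turns membership in $\mathscr{H}(K)$ into a soft smoothness condition; once this is in hand, any open gap left by the discreteness of $S$ accepts a smooth bump orthogonal to every $K(\cdot, s)$. The only mild technicality is to choose $J$ to avoid both $s_{0}$ and $0$ simultaneously, which is what guarantees $\phi(s_{0}) = 0$ and $\phi(0) = 0$, hence that $\phi$ actually lies in $\mathscr{H}(K)$.
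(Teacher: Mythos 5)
Your proof is correct, and it rests on the same two pillars as the paper's argument: the Cameron--Martin identification of $\mathscr{H}\left(K\right)$ from \lemref{cm}, and the reduction, via the reproducing property, to exhibiting a nonzero $f\in\mathscr{H}\left(K\right)$ vanishing on all of $S$ (this reduction is essentially \lemref{span}). Where you genuinely differ is in the construction of that $f$. The paper builds a global saw-tooth, nonzero on every gap $\left(x_{n},x_{n+1}\right)$, and must then impose the summability condition (\ref{eq:sp11}) on the slopes $c_{n}$ to ensure $f'\in L^{2}$; you place a single $C^{1}$ bump in one open interval that $S$ misses, for which membership in $\mathscr{H}\left(K\right)$ is immediate and no tuning of constants is needed. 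Your local argument is also slightly more robust: the paper's proof tacitly enumerates $S$ as an increasing sequence $0<x_{1}<x_{2}<\cdots$, which is not possible for every countable discrete subset of $[0,\infty)$ (e.g., a discrete set accumulating at an interior point from both sides), whereas you use only that a discrete set is nowhere dense, so some open subinterval of $T\setminus\left(S\cup\left\{ 0\right\} \right)$ exists. What the paper's heavier construction buys is reuse: the saw-tooth and condition (\ref{eq:sp11}) feed directly into the subsequent corollary on solving the interpolation problem by piecewise linear splines, which a single bump does not give. One point worth keeping explicit in your write-up: \lemref{cm} as stated omits the normalization $f\left(0\right)=0$ (without it, $\int_{T}\left|f'\right|^{2}$ is only a seminorm); you correctly include it, and your bump satisfies it because its support avoids $0$.
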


\begin{proof}
Suppose $S=\left\{ x_{n}\right\} $, where 
\begin{equation}
0<x_{1}<x_{2}<\cdots<x_{n}<x_{n+1}<\cdots;\label{eq:sp7}
\end{equation}
then consider the following function
\begin{equation}
\raisebox{-6mm}{\includegraphics[width=0.7\textwidth]{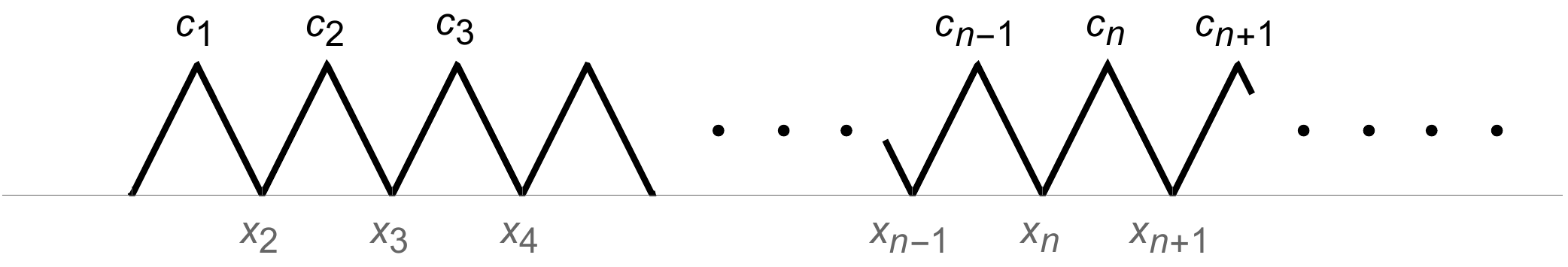}}\label{eq:sp9}
\end{equation}

On the respective intervals $\left[x_{n},x_{n+1}\right]$, the function
$f$ is as follows:
\[
f\left(x\right)=\begin{cases}
c_{n}\left(x-x_{n}\right) & \text{if }x_{n}\leq x\leq\frac{x_{n}+x_{n+1}}{2}\\
c_{n}\left(x_{n+1}-x\right) & \text{if }\frac{x_{n}+x_{n+1}}{2}<x\leq x_{n+1}.
\end{cases}
\]
In particular, $f\left(x_{n}\right)=f\left(x_{n+1}\right)=0$, and
on the midpoints: 
\[
f\left(\frac{x_{n}+x_{n+1}}{2}\right)=c_{n}\frac{x_{n+1}-x_{n}}{2},
\]
see \figref{stooth}.
\begin{figure}[H]
\includegraphics[width=0.35\textwidth]{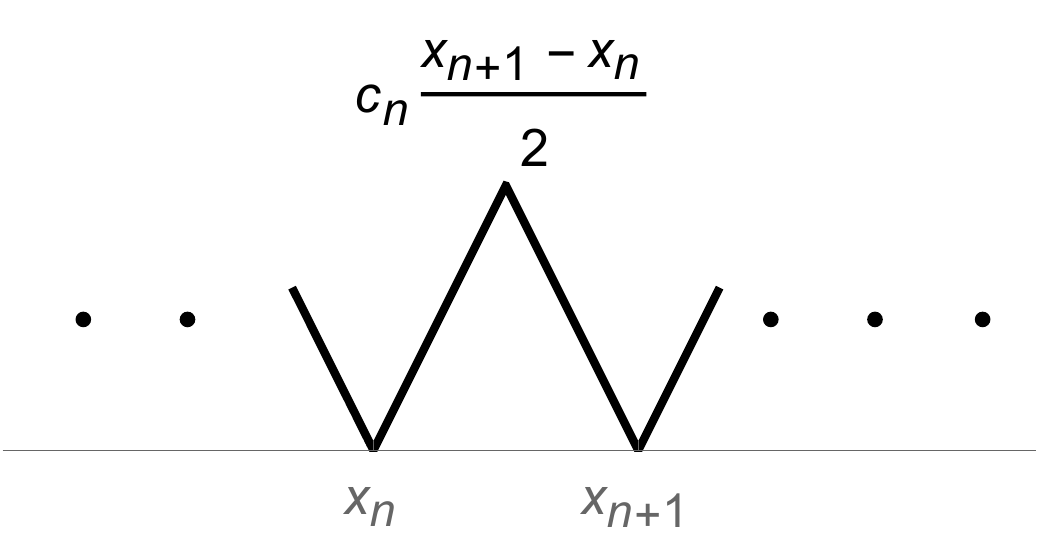}

\caption{\label{fig:stooth}The saw-tooth function.}
\end{figure}

Choose $\left\{ c_{n}\right\} _{n\in\mathbb{N}}$ such that 
\begin{equation}
\sum_{n\in\mathbb{N}}\left|c_{n}\right|^{2}\left(x_{n+1}-x_{n}\right)<\infty.\label{eq:sp11}
\end{equation}
Admissible choices for the slope-values $c_{n}$ include 
\[
c_{n}=\frac{1}{n\sqrt{x_{n+1}-x_{n}}},\;n\in\mathbb{N}.
\]

We will now show that $f\in\mathscr{H}\left(K\right)$. To do this,
use (\ref{eq:sp8}). For the distribution derivative computed from
(\ref{eq:sp9}), we get 
\begin{equation}
\raisebox{-12mm}{\includegraphics[width=0.7\textwidth]{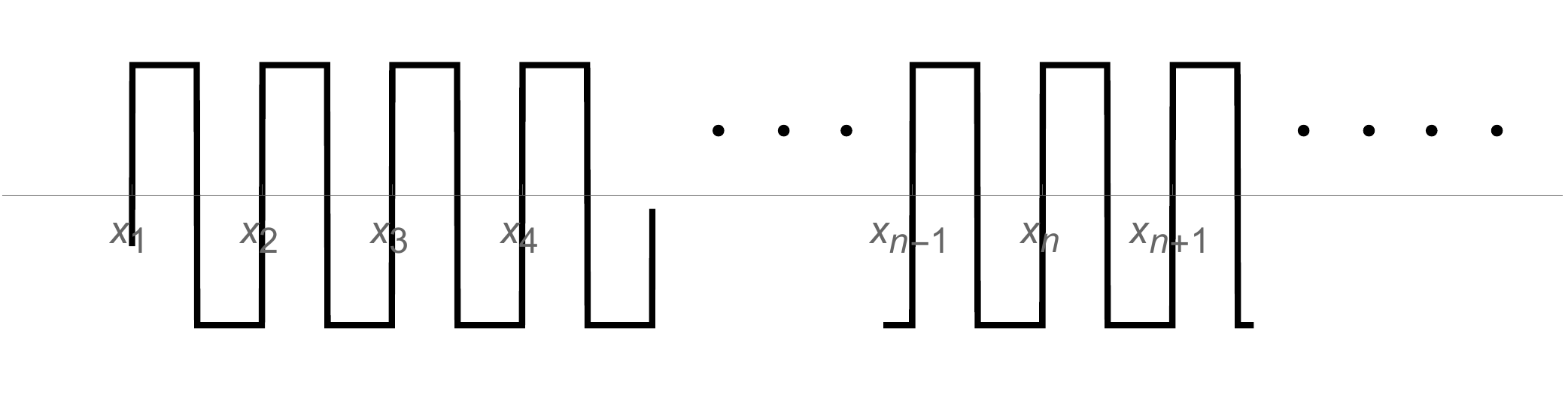}}\label{eq:sp9b}
\end{equation}
\[
\int_{0}^{\infty}\left|f'\left(x\right)\right|^{2}dx=\sum_{n\in\mathbb{N}}\left|c_{n}\right|^{2}\left(x_{n+1}-x_{n}\right)<\infty
\]
which is the desired conclusion, see (\ref{eq:sp9}).
\end{proof}
\begin{cor}
For the kernel $K\left(s,t\right)=s\wedge t$ in (\ref{eq:sp6}),
$T=[0,\infty)$, the following holds:

Given $\left\{ x_{j}\right\} _{j\in\mathbb{N}}\subset\mathbb{R}_{+}$,
$\left\{ y_{j}\right\} _{j\in\mathbb{N}}\subset\mathbb{R}$, then
the interpolation problem 
\begin{equation}
f\left(x_{j}\right)=y_{j},\;f\in\mathscr{H}\left(K\right)\label{eq:ip1}
\end{equation}
is solvable if 
\begin{equation}
\sum_{j\in\mathbb{N}}\left(y_{j+1}-y_{j}\right)^{2}/\left(x_{j+1}-x_{j}\right)<\infty.\label{eq:sp2}
\end{equation}
\end{cor}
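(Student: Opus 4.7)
The plan is to prove the corollary by \emph{constructing} the interpolant $f$ explicitly as a piecewise linear function through the prescribed data, and then verifying that it lies in $\mathscr{H}(K)$ by appealing to the Cameron--Martin description of the norm in \lemref{cm}. Concretely, after relabelling if necessary so that $0 < x_1 < x_2 < \cdots$, I would set $f(0) := 0$ (this value is forced, since $K(\cdot,0) \equiv 0$ implies every element of $\mathscr{H}(K)$ vanishes at $0$), $f(x_j) := y_j$ for all $j$, and then define $f$ on each interval $[x_j, x_{j+1}]$ by linear interpolation between $(x_j, y_j)$ and $(x_{j+1}, y_{j+1})$, and on the initial segment $[0, x_1]$ by linear interpolation between $(0,0)$ and $(x_1, y_1)$. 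If the sequence $\{x_j\}$ is bounded, extend $f$ to be constant beyond $\sup_j x_j$; otherwise $f$ is already defined on all of $[0,\infty)$.

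Next I would compute the distribution derivative, which on each piece is simply the slope:
\[
f'(x) \;=\; \frac{y_{j+1}-y_j}{x_{j+1}-x_j} \qquad \text{for } x \in (x_j, x_{j+1}),
\]
and $f'(x) = y_1/x_1$ on $(0, x_1)$. By \lemref{cm}, the membership $f \in \mathscr{H}(K)$ together with the norm identity (\ref{eq:sp8}) reduces the verification to
\[
\int_0^\infty |f'(x)|^2\, dx \;=\; \frac{y_1^2}{x_1} \;+\; \sum_{j \geq 1} \frac{(y_{j+1}-y_j)^2}{x_{j+1}-x_j} \;<\; \infty,
\]
which is guaranteed by the hypothesis (\ref{eq:sp2}) (with the harmless convention $x_0 := 0$, $y_0 := 0$, so that the leading term $y_1^2/x_1$ is absorbed as the $j=0$ summand). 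Since $f(x_j) = y_j$ by construction, $f$ solves the interpolation problem (\ref{eq:ip1}).

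There is essentially no obstacle: the construction is a direct adaptation of the saw-tooth function used in the proof of \thmref{bm}, with the zero boundary values at the sample points replaced by the prescribed values $y_j$. The only subtle point is the behaviour at the left endpoint $0$, where the vanishing condition $f(0)=0$ intrinsic to $\mathscr{H}(K)$ must be respected; this is handled by including the segment $[0,x_1]$ in the piecewise linear construction and noting that its contribution $y_1^2/x_1$ to the Dirichlet integral is finite (and, under the natural convention $x_0=y_0=0$, is already the first term of the sum in (\ref{eq:sp2})).
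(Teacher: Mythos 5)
Your proposal is correct and follows essentially the same route as the paper: both construct the piecewise linear spline through the data and verify membership in $\mathscr{H}(K)$ by computing the Dirichlet integral $\int_0^\infty |f'|^2\,dx$ via \lemref{cm}. Your additional care at the left endpoint --- noting that $f(0)=0$ is forced since $K(\cdot,0)\equiv 0$ and that the segment $[0,x_1]$ contributes the extra term $y_1^2/x_1$, absorbed by the convention $x_0=y_0=0$ --- is a detail the paper's one-line proof glosses over, and is a welcome refinement rather than a departure.
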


\begin{proof}
Let $f$ be the piecewise linear spline (see \figref{ip}) for the
problem (\ref{eq:ip1}), see \figref{ip}; then the $\mathscr{H}\left(K\right)$-norm
is as follows: 
\[
\int_{0}^{\infty}\left|f'\left(x\right)\right|^{2}dx=\sum_{j\in\mathbb{N}}\left(\frac{y_{j+1}-y_{j}}{x_{j+1}-x_{j}}\right)^{2}\left(x_{j+1}-x_{j}\right)<\infty
\]
when (\ref{eq:sp2}) holds. 
\end{proof}
\begin{figure}[H]
\includegraphics[width=0.35\textwidth]{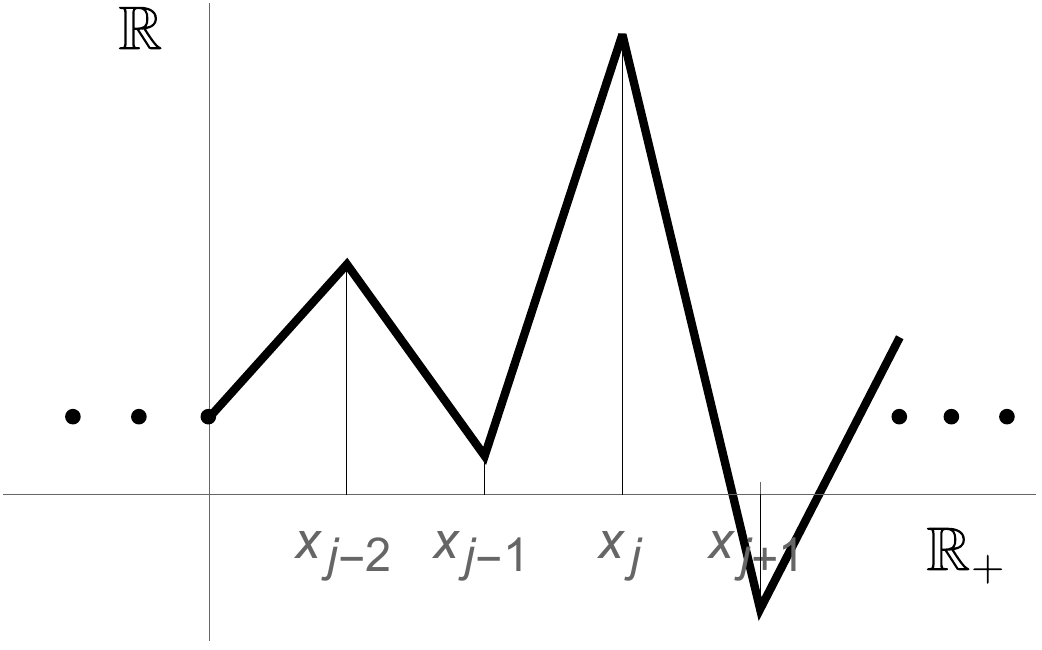}

\caption{\label{fig:ip}Piecewise linear spline.}

\end{figure}
\begin{rem}
Let $K$ be as in (\ref{eq:sp6}), where 
\[
K\left(s,t\right)=s\wedge t,\quad s,t\in[0,\infty).
\]
For all $0\leq x_{j}<x_{j+1}<\infty$, let 
\begin{align*}
f_{j}\left(x\right): & =\frac{2}{x_{j+1}-x_{j}}\left(K\left(x-x_{j},\frac{x_{j+1}-x_{j}}{2}\right)-K\left(x-\frac{x_{j}+x_{j+1}}{2},\frac{x_{j+1}-x_{j}}{2}\right)\right)\\
 & =\raisebox{-5mm}{\includegraphics[width=0.4\textwidth]{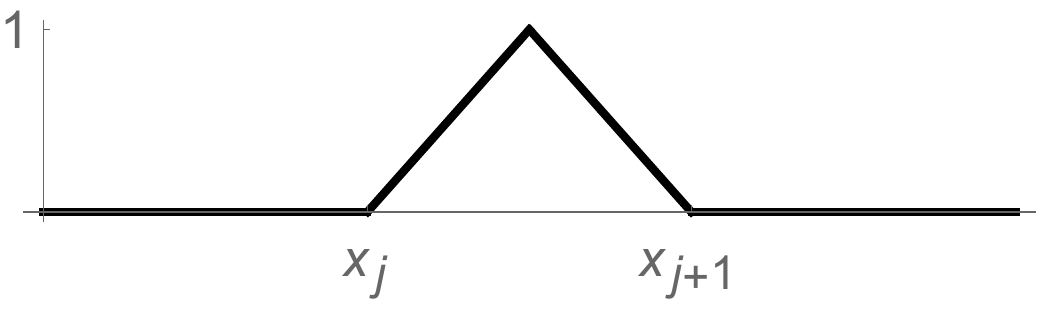}}
\end{align*}
Assuming (\ref{eq:sp11}) holds, then 
\[
f\left(x\right)=\sum_{j}c_{j}f_{j}\left(x\right)\in\mathscr{H}\left(K\right).
\]
\end{rem}

\begin{rem}
Let $K\left(s,t\right)=s\wedge t$, $\left(s,t\right)\in[0,\infty)\times[0,\infty)$,
extend to $\widetilde{K}\left(s,t\right)=\left|s\right|\wedge\left|t\right|$,
$\left(s,t\right)\in\mathbb{R}\times\mathbb{R}$, and $\mathscr{H}(\widetilde{K})=$
all $f$ on $\mathbb{R}$ such that the distribution derivative $f'$
exists on $\mathbb{R}$, and 
\[
\left\Vert f\right\Vert _{\mathscr{H}(\widetilde{K})}^{2}=\int_{\mathbb{R}}\left|f'\left(x\right)\right|^{2}dx.
\]
 
\end{rem}

\begin{thm}
Let $T$ be a set of cardinality $c$ of the continuum, and let $K:T\times T\rightarrow\mathbb{R}$
be a positive definite kernel. Let $S=\left\{ x_{j}\right\} _{j\in\mathbb{N}}$
be a discrete subset of $T$. Suppose there are weights $\left\{ w_{j}\right\} _{j\in\mathbb{N}}$,
$w_{j}\in\mathbb{R}_{+}$, such that 
\begin{equation}
\left(f\left(x_{j}\right)\right)\in l^{2}\left(\mathbb{N},w\right)\label{eq:c1}
\end{equation}
for all $f\in\mathscr{H}\left(K\right)$. Suppose further that there
is a point $t_{0}\in T\backslash S$, a $y_{0}\in\mathbb{R}\backslash\left\{ 0\right\} $,
and $\alpha\in\mathbb{R}_{+}$ such that the infimum 
\begin{equation}
\inf_{f\in\mathscr{H}\left(K\right)}\left\{ \sum\nolimits _{j}w_{j}\left|f\left(x_{j}\right)\right|^{2}+\left|f\left(t_{0}\right)-y_{0}\right|^{2}+\alpha\left\Vert f\right\Vert _{\mathscr{H}\left(K\right)}^{2}\right\} \label{eq:c2}
\end{equation}
is strictly positive. 

Then $S$ is \uline{not} a interpolation set for $\left(K,T\right)$. 
\end{thm}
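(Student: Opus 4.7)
The plan is to argue by contrapositive: suppose $S$ \emph{is} an interpolation set for $(K,T)$, and we drive the functional in \eqref{eq:c2} down to zero, contradicting the standing hypothesis. First, \eqref{eq:c1} together with the closed graph theorem upgrades the restriction map $R:\mathscr{H}(K)\to\ell^{2}(\mathbb{N},w)$, $Rf=(f(x_{j}))_{j}$, to a bounded operator with adjoint $R^{*}\xi=\sum_{j}w_{j}\xi_{j}K(\cdot,x_{j})$. Writing $e:=K(\cdot,t_{0})$ and packaging the data as $Tf:=(Rf,f(t_{0}))\in\ell^{2}(\mathbb{N},w)\oplus\mathbb{R}$ with target $z:=(0,y_{0})$, the functional \eqref{eq:c2} takes the Tikhonov form
\[
J_{\alpha}(f)=\|Tf-z\|^{2}+\alpha\|f\|_{\mathscr{H}(K)}^{2}.
\]

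Because $\alpha>0$ forces $J_{\alpha}$ to be strictly convex and coercive, the minimum is attained uniquely at $f_{\alpha}^{*}=T^{*}(TT^{*}+\alpha I)^{-1}z$, and the standard Tikhonov identity yields $\inf_{f}J_{\alpha}(f)=\alpha\langle z,\,(TT^{*}+\alpha I)^{-1}z\rangle$. Writing $TT^{*}$ in $2\times 2$ block form on $\ell^{2}(\mathbb{N},w)\oplus\mathbb{R}$ with off-diagonal entry $k:=Re=(K(x_{j},t_{0}))_{j}\in\ell^{2}(\mathbb{N},w)$ (finite by \eqref{eq:c1} applied to $f=e$) and lower-right scalar $K(t_{0},t_{0})$, a Schur-complement computation collapses the infimum to
\[
\inf_{f}J_{\alpha}(f)=\frac{\alpha\,y_{0}^{2}}{\bigl(K(t_{0},t_{0})+\alpha\bigr)-\bigl\langle(RR^{*}+\alpha I)^{-1}k,\,k\bigr\rangle_{\ell^{2}(\mathbb{N},w)}}.
\]
Thus positivity of the infimum is exactly a strict gap in the Schur denominator, i.e.\ the failure of $K(\cdot,t_{0})$ to be recoverable from its $S$-samples via the weighted normal equations.

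The final step closes the loop: if $S$ were an interpolation set, the open mapping theorem applied to $R$ would place $e=K(\cdot,t_{0})$ in the closed range of $R^{*}$---equivalently, there is $\eta\in\ell^{2}(\mathbb{N},w)$ (or an approximating sequence) with $R^{*}\eta=e$---and then the reproducing identity $K(t_{0},t_{0})=\langle e,e\rangle=\langle R^{*}\eta,e\rangle=\langle\eta,k\rangle$ collapses the Schur denominator, forcing $\inf J_{\alpha}=0$. The main obstacle is matching the precise notion of ``interpolation set'' used here (surjectivity of $R$, density of $\{K(\cdot,x_{j})\}_{j}$ in $\mathscr{H}(K)$, or existence of a sampling expansion at each $t\in T\setminus S$) to the exact normal equation we must solve; once that identification is made the Schur-complement bookkeeping is automatic, and the resulting dichotomy parallels the contrast between \exaref{shan} and \thmref{bm}.
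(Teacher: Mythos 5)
Your reduction of (\ref{eq:c2}) to a Tikhonov functional, the closed--form value $\inf_{f}J_{\alpha}(f)=\alpha\langle z,(TT^{*}+\alpha I)^{-1}z\rangle$, and the Schur--complement expression are all correct. The genuine gap is the final step: being an interpolation set does \emph{not} collapse the Schur denominator, and in fact the infimum in (\ref{eq:c2}) can never be zero, no matter what $S$ is. Indeed $|f(t_{0})|=|\langle K(\cdot,t_{0}),f\rangle|\leq\sqrt{K(t_{0},t_{0})}\,\Vert f\Vert_{\mathscr{H}(K)}$ gives
\[
J_{\alpha}(f)\;\geq\;|f(t_{0})-y_{0}|^{2}+\frac{\alpha}{K(t_{0},t_{0})}\,|f(t_{0})|^{2}\;\geq\;\frac{\alpha\,y_{0}^{2}}{K(t_{0},t_{0})+\alpha}\;>\;0
\]
for every $f$, independently of $S$. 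In your own formula the same phenomenon appears: $\langle(RR^{*}+\alpha I)^{-1}k,k\rangle=\langle R^{*}(RR^{*}+\alpha I)^{-1}R\,e,e\rangle\leq\frac{\Vert R\Vert^{2}}{\Vert R\Vert^{2}+\alpha}K(t_{0},t_{0})<K(t_{0},t_{0})$, so the denominator always exceeds $\alpha$; and substituting $e=R^{*}\eta$ yields denominator $=\alpha\bigl(1+\langle(RR^{*}+\alpha I)^{-1}RR^{*}\eta,\eta\rangle\bigr)$, which is bounded \emph{below} by $\alpha$, not driven to $0$. The identity $K(t_{0},t_{0})=\langle\eta,k\rangle$ you invoke is not the quantity that occurs in the Schur complement --- the regularized resolvent $(RR^{*}+\alpha I)^{-1}$ intervenes --- and this is exactly where your contrapositive breaks. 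Consequently the positivity of (\ref{eq:c2}) cannot by itself separate interpolation sets from non--interpolation sets, and your argument does not reach the conclusion.

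The paper's own proof takes a different route: it never evaluates the infimum, but instead passes to the unique minimizer $f=(\alpha I+L^{*}L)^{-1}L^{*}(\cdots)$ of (\ref{eq:c2}), asserts that under the hypothesis this $f$ is a nonzero element of $\mathscr{H}(K)$ with $f(x_{j})=0$ for all $j$, and then concludes from \lemref{span} that the closed span of $\{K(\cdot,x_{j})\}_{j}$ is proper, i.e.\ $0\neq f\in\{K(\cdot,x_{j})\}_{j}^{\perp}$. If you want to repair your argument you must, as the paper does, extract from the hypothesis a nonzero function vanishing on $S$ (equivalently a nonzero vector orthogonal to all $K(\cdot,x_{j})$), rather than a statement about the numerical value of the regularized infimum; the latter carries no information about $S$ once $\alpha>0$ and $y_{0}\neq0$ are fixed.
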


\begin{proof}
Let $L$ denote the analysis operator defined from condition (\ref{eq:c1})
in the statement of the theorem; see also the beginning in the proof
of \lemref{fr} above, and let $L^{*}$ denote the corresponding adjoint
operator, the synthesis operator. Using now \cite{MR2228737,MR3450534},
we conclude that the function $f$ which minimizes the problem (\ref{eq:c2})
is unique, and in fact
\begin{equation}
f=\left(\alpha I+L^{*}L\right)^{-1}L^{*}\left(\left(y_{j}\right)\cup\left(t_{0}\right)\right).\label{eq:c3}
\end{equation}
So, by the hypothesis in the theorem, we get $f\in\mathscr{H}\left(K\right)\backslash\left\{ 0\right\} $,
and $f\left(x_{j}\right)=0$, for all $j\in\mathbb{N}$. Then it follows
that the closed span of $\left\{ K\left(\cdot,x_{j}\right)\right\} _{j\in\mathbb{N}}$
is not $\mathscr{H}\left(K\right)$; specifically, $0\neq f\in\left\{ K\left(\cdot,x_{j}\right)\right\} _{j\in\mathbb{N}}^{\perp}$.
See also \lemref{span} and \figref{as}.
\end{proof}
\begin{figure}[H]
\[
\xymatrix{l^{2}\left(\left\{ x_{j}\right\} \cup\left\{ t_{0}\right\} ,w\right)\ar@/^{1.5pc}/[r]^{L^{*}} & \mathscr{H}\left(K\right)\ar@/^{1.5pc}/[l]^{L}}
\]

\caption{\label{fig:as}Analysis and synthesis operators.}
\end{figure}
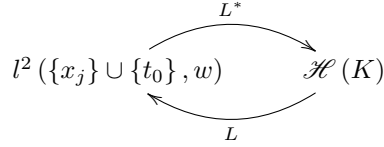
\begin{thm}
\label{thm:ps}Let $K:T\times T\rightarrow\mathbb{R}$ be a positive
definite kernel, and let $S\subset T$ be a countable discrete subset.
The RKHS $\mathscr{H}\left(K\right)$ refers to the pair $\left(K,T\right)$.
For all $s\in S$, set $K_{s}\left(\cdot\right)=K\left(\cdot,s\right)$.
The the following four conditions are equivalent: 
\begin{enumerate}
\item \label{enu:ps1}The family $\left\{ K_{s}\right\} _{s\in S}$ is a
Parseval frame in $\mathscr{H}\left(K\right)$; 
\item \label{enu:ps2}
\[
\left\Vert f\right\Vert _{\mathscr{H}\left(K\right)}^{2}=\sum_{s\in S}\left|f\left(s\right)\right|^{2},\;\forall f\in\mathscr{H}\left(K\right);
\]
\item \label{enu:ps3}
\[
K\left(t,t\right)=\sum_{s\in S}\left|K\left(t,s\right)\right|^{2},\;\forall t\in T;
\]
\item \label{enu:ps4}
\[
f\left(t\right)=\sum_{s\in S}f\left(s\right)K\left(t,s\right),\;\forall f\in\mathscr{H}\left(K\right),\:\forall t\in T,
\]
where the sum converges in the norm of $\mathscr{H}\left(K\right)$. 
\end{enumerate}
\end{thm}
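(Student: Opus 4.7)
The plan is to organize the equivalence around the analysis operator introduced in the proof of \lemref{fr}: let $A:\mathscr{H}\left(K\right)\to\ell^{2}\left(S\right)$ be $Af=\left(f\left(s\right)\right)_{s\in S}$, with formal adjoint $A^{*}\xi=\sum_{s\in S}\xi_{s}K_{s}$. By the reproducing identity $\langle f,K_{s}\rangle_{\mathscr{H}\left(K\right)}=f\left(s\right)$, conditions (\ref{enu:ps1}) (Parseval frame) and (\ref{enu:ps2}) are literally the same statement, and both are equivalent to $A^{*}A=I_{\mathscr{H}\left(K\right)}$. So (\ref{enu:ps1})~$\Leftrightarrow$~(\ref{enu:ps2}) is immediate.

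For (\ref{enu:ps2})~$\Leftrightarrow$~(\ref{enu:ps4}): assuming $A^{*}A=I$, one gets $f=A^{*}(Af)=\sum_{s\in S}f\left(s\right)K_{s}$ with convergence in $\mathscr{H}\left(K\right)$-norm, and applying the reproducing property at $t$ yields (\ref{enu:ps4}). Conversely, from the norm-convergent expansion in (\ref{enu:ps4}) one computes
\[
\|f\|_{\mathscr{H}\left(K\right)}^{2}=\Bigl\langle f,\sum_{s\in S}f\left(s\right)K_{s}\Bigr\rangle=\sum_{s\in S}f\left(s\right)\langle f,K_{s}\rangle=\sum_{s\in S}\left|f\left(s\right)\right|^{2},
\]
which is (\ref{enu:ps2}). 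The implication (\ref{enu:ps2})~$\Rightarrow$~(\ref{enu:ps3}) then follows by specializing to $f=K_{t}$ and using $\|K_{t}\|^{2}=K\left(t,t\right)$ together with $K_{t}\left(s\right)=K\left(t,s\right)$.

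The substantive direction is (\ref{enu:ps3})~$\Rightarrow$~(\ref{enu:ps2}). Condition (\ref{enu:ps3}) is precisely the statement that $\langle A^{*}AK_{t},K_{t}\rangle=\langle K_{t},K_{t}\rangle$ for every $t\in T$, so the self-adjoint operator $D:=I-A^{*}A$ has vanishing quadratic form on every kernel section: $\langle DK_{t},K_{t}\rangle=0$ for all $t\in T$. Given positivity $D\ge 0$ (the Bessel bound $A^{*}A\leq I_{\mathscr{H}\left(K\right)}$), this would force $DK_{t}=0$ for every $t$, whereupon density of $\text{span}\{K_{t}:t\in T\}$ in $\mathscr{H}\left(K\right)$ (cf.\ \remref{rk} and \lemref{span}) would yield $D=0$ and hence (\ref{enu:ps2}).

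The main obstacle is therefore establishing the Bessel bound $A^{*}A\leq I_{\mathscr{H}\left(K\right)}$ from (\ref{enu:ps3}) alone. The decisive point is that (\ref{enu:ps3}) is assumed at \emph{every} $t\in T$, not only at the sample points $s\in S$; this makes the feature vector $\left(K\left(t,s\right)\right)_{s\in S}\in\ell^{2}\left(S\right)$ available with exact $\ell^{2}$-norm $\|K_{t}\|$ for each $t$. I would derive the Bessel estimate first on the dense subspace $\text{span}\{K_{t}:t\in T\}$: for $f=\sum_{j=1}^{n}c_{j}K_{t_{j}}$, a Cauchy--Schwarz argument in $\ell^{2}\left(S\right)$ applied to the vectors $\left(K\left(t_{j},\cdot\right)\right)_{s\in S}$, combined with positive definiteness of $K$ to control cross-terms, gives $\sum_{s\in S}\left|f\left(s\right)\right|^{2}\leq\|f\|^{2}$. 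Extending by continuity yields $A^{*}A\leq I$ on all of $\mathscr{H}\left(K\right)$, and the positivity-plus-density closure described above completes the implication.
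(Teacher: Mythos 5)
Your handling of (\ref{enu:ps1})$\Leftrightarrow$(\ref{enu:ps2})$\Leftrightarrow$(\ref{enu:ps4}) and of (\ref{enu:ps2})$\Rightarrow$(\ref{enu:ps3}) is correct and essentially the same as the paper's. The gap is exactly at the step you yourself identify as the crux, (\ref{enu:ps3})$\Rightarrow$(\ref{enu:ps2}): you assert that ``a Cauchy--Schwarz argument \ldots{} combined with positive definiteness of $K$'' yields the Bessel bound $\sum_{s\in S}\left|f\left(s\right)\right|^{2}\leq\left\Vert f\right\Vert _{\mathscr{H}\left(K\right)}^{2}$ on $\operatorname{span}\left\{ K_{t}\right\}$, but you never produce the argument, and none exists. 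Condition (\ref{enu:ps3}) only constrains the \emph{diagonal} of the two positive definite kernels $K\left(t,t'\right)$ and $L\left(t,t'\right):=\sum_{s}K\left(t,s\right)K\left(s,t'\right)$, and that does not control the off-diagonal. Concretely, take $T=S=\left\{ 1,2\right\}$ and
\[
K=\begin{pmatrix}0.8 & 0.4\\
0.4 & 0.8
\end{pmatrix},
\]
which is positive definite (eigenvalues $1.2$ and $0.4$) and satisfies (\ref{enu:ps3}) at both points since $0.8^{2}+0.4^{2}=0.8$. Yet for $f=K_{1}+K_{2}$ one gets $\sum_{s}\left|f\left(s\right)\right|^{2}=2\left(1.2\right)^{2}=2.88$ while $\left\Vert f\right\Vert _{\mathscr{H}\left(K\right)}^{2}=\sum_{i,j}K\left(i,j\right)=2.4$; so the Bessel bound with constant $1$ fails, and (\ref{enu:ps2}) fails. (If you insist on $S\subsetneq T$, pad $T$ with points where $K$ vanishes, or take a direct sum with a genuine Parseval example.) What actually characterizes the Parseval property is the full identity $K\left(t,t'\right)=\sum_{s}K\left(t,s\right)K\left(s,t'\right)$ for all $t,t'\in T$ --- under which your ``isometry plus density'' closure does work --- and this is strictly stronger than its diagonal case (\ref{enu:ps3}).

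For comparison, the paper's own proof of (\ref{enu:ps3})$\Rightarrow$(\ref{enu:ps4}) stumbles at the same spot: collapsing the double sum $\sum_{s,s'}K\left(t,s\right)K\left(t,s'\right)K\left(s',s\right)$ in (\ref{eq:ps3}) to $\sum_{s}\left|K\left(t,s\right)\right|^{2}$ already uses the off-diagonal reproducing identity $\sum_{s'}K\left(t,s'\right)K\left(s',s\right)=K\left(t,s\right)$ that is being proved; the $2\times2$ example above makes $\text{LHS}_{\left(\ref{eq:ps3}\right)}=0.096\neq0$. So your instinct that this is the substantive direction is right, but the missing estimate is not a technicality you can defer --- as stated it is false, and the implication needs either the strengthened, off-diagonal form of (\ref{enu:ps3}) or some additional standing hypothesis on $S$.
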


\begin{proof}
(\ref{enu:ps1}) $\Rightarrow$ (\ref{enu:ps2}). Assume (\ref{enu:ps1}),
and note that 
\begin{equation}
\left\langle K_{s},f\right\rangle _{\mathscr{H}\left(K\right)}=f\left(s\right);\label{eq:ps1}
\end{equation}
and (\ref{enu:ps2}) is immediate from the definition of Parseval-frame. 

(\ref{enu:ps2}) $\Rightarrow$ (\ref{enu:ps3}). Assume (\ref{enu:ps2}),
and set $f=K_{t}$. Note that $\left\Vert K_{t}\right\Vert _{\mathscr{H}\left(K\right)}^{2}=K\left(t,t\right)$,
and $\left\langle K_{s},K_{t}\right\rangle _{\mathscr{H}\left(K\right)}=K\left(s,t\right)$. 

(\ref{enu:ps3}) $\Rightarrow$ (\ref{enu:ps4}). It is enough to
prove that 
\begin{equation}
K_{t}=\sum_{s\in S}K\left(t,s\right)K_{s},\;\forall t\in T;\label{eq:ps2}
\end{equation}
then (\ref{enu:ps4}) follows from an application of the reproducing
property of the Hilbert space $\mathscr{H}\left(K\right)$. Now (\ref{eq:ps2})
follows from 
\begin{equation}
\left\Vert K_{t}-\sum\nolimits _{s\in S}K\left(t,s\right)K_{s}\right\Vert _{\mathscr{H}\left(K\right)}^{2}=0.\label{eq:ps3}
\end{equation}
Finally, (\ref{eq:ps3}) follows from (\ref{enu:ps3}) and multiple
application of the kernel property: 
\[
\text{LHS}{}_{\left(\ref{eq:ps3}\right)}=K\left(t,t\right)+\underset{\left(s,s'\right)\in S\times S}{\sum\sum}K\left(t,s\right)K\left(t,s'\right)K\left(s',s\right)-2\sum_{s\in S}\left|K\left(t,s\right)\right|^{2}=0.
\]

(\ref{enu:ps4}) $\Rightarrow$ (\ref{enu:ps1}). It is clear that
(\ref{enu:ps1}) $\Leftrightarrow$ (\ref{enu:ps2}), and that (\ref{enu:ps4})
$\Rightarrow$ (\ref{enu:ps2}).
\end{proof}
\begin{rem}[Stationary kernels]
Suppose $K:\mathbb{R}\times\mathbb{R}\rightarrow\mathbb{R}$ is a
continuous positive definite kernel, and $K\left(s,t\right)=k\left(s-t\right)$,
i.e., stationary. Set $K_{t}\left(\cdot\right):=K\left(\cdot,t\right)=k\left(\cdot-t\right)$.
By Bochner's theorem, 
\[
k\left(t\right)=\int_{\mathbb{R}}e^{itx}d\mu\left(x\right),
\]
where $\mu$ is a finite positive Borel measure on $\mathbb{R}$.
Thus, 
\[
V:K_{t}\longmapsto e^{-itx}\in L^{2}\left(\mu\right)
\]
extends to an isometry from $\mathscr{H}\left(K\right)$ into $L^{2}\left(\mu\right)$. 

Let $S\subset\mathbb{R}$ be a countable discrete subset, then for
$f\in\mathscr{H}\left(K\right)$, we have 
\begin{gather*}
\left\langle K_{s},f\right\rangle _{\mathscr{H}\left(K\right)}=0,\;\forall s\in S\\
\Updownarrow\\
\left\langle VK_{s},Vf\right\rangle _{L^{2}\left(\mu\right)}=0,\;\forall s\in S\\
\Updownarrow\\
\int_{\mathbb{R}}e^{isx}\left(Vf\right)\left(x\right)d\mu\left(x\right)=0,\;\forall s\in S.
\end{gather*}
So $S$ has the sampling property if and only if 
\[
\left[\left(\left(Vf\right)d\mu\right)^{\wedge}\left(s\right)=0,\;\forall s\in S\right]\Longrightarrow\begin{bmatrix}Vf=0,\;i.e.,\;f=0,\;\mu-\text{a.e.}\end{bmatrix}
\]
\end{rem}

\section{Discrete RKHSs}

A closely related question from the above discussion is the dichotomy
of \emph{discrete} vs \emph{continuous} RKHSs. Our focus in the present
section is on the discrete case, i.e., RKHSs of functions defined
on a prescribed countable infinite discrete set $V$. 
\begin{defn}[\cite{MR3450534}]
\label{def:dmp}The RKHS $\mathscr{H}=\mathscr{H}\left(K\right)$
is said to have the \emph{discrete mass} property ($\mathscr{H}$
is called a \emph{discrete RKHS}), if $\delta_{x}\in\mathscr{H}$,
for all $x\in V$. Here, $\delta_{x}\left(y\right)$ is the Dirac
mass at $x\in V$. 
\end{defn}

\begin{question}
Let $K:\mathbb{R}^{d}\times\mathbb{R}^{d}\rightarrow\mathbb{R}$ be
positive definite, and let $V\subset\mathbb{R}^{d}$ be a countable
discrete subset. When does $K\big|_{V\times V}$ have the discrete
mass property? 
\end{question}

Of the examples and applications where this question plays an important
role, we emphasize three: (i) discrete Brownian motion-Hilbert spaces,
i.e., discrete versions of the Cameron-Martin Hilbert space; (ii)
energy-Hilbert spaces corresponding to graph-Laplacians; and finally
(iii) RKHSs generated by binomial coefficients. We show that the point-masses
have finite $\mathscr{H}$-norm in cases (i) and (ii), but not in
case (iii).
\begin{defn}
\label{def:d1}Let $V$ be a countably infinite set, and let $\mathscr{F}\left(V\right)$
denote the set of all \emph{finite} subsets of $V$. 
\begin{enumerate}
\item For all $x\in V$, set 
\begin{equation}
K_{x}:=K\left(\cdot,x\right):V\rightarrow\mathbb{C}.\label{eq:pd2}
\end{equation}
\item Let $\mathscr{H}:=\mathscr{H}\left(K\right)$ be the Hilbert-completion
of the $span\left\{ K_{x}:x\in V\right\} $, with respect to the inner
product 
\begin{equation}
\left\langle \sum c_{x}K_{x},\sum d_{y}K_{y}\right\rangle _{\mathscr{H}}:=\sum\sum\overline{c_{x}}d_{y}K\left(x,y\right)\label{eq:pd3}
\end{equation}
$\mathscr{H}$ is then a reproducing kernel Hilbert space (RKHS),
with the reproducing property:
\begin{equation}
\varphi\left(x\right)=\left\langle K_{x},\varphi\right\rangle _{\mathscr{H}},\;\forall x\in V,\:\forall\varphi\in\mathscr{H}.\label{eq:pd31}
\end{equation}
\item If $F\in\mathscr{F}\left(V\right)$, set $\mathscr{H}_{F}=span\{K_{x}\}_{x\in F}\subset\mathscr{H}$,
and let 
\begin{equation}
P_{F}:=\text{the orthogonal projection onto \ensuremath{\mathscr{H}_{F}}}.\label{eq:pd4}
\end{equation}
\item For $F\in\mathscr{F}\left(V\right)$, let $K_{F}$ be the matrix given
by 
\begin{equation}
K_{F}:=\left(K\left(x,y\right)\right)_{\left(x,y\right)\in F\times F}.\label{eq:pd5}
\end{equation}
\end{enumerate}
\end{defn}

\begin{lem}
\label{lem:proj1}Let $F\in\mathscr{F}\left(V\right)=$ all finite
subsets of $V$, $x_{1}\in F$. Assume $\delta_{x_{1}}\in\mathscr{H}$.
Then 
\begin{equation}
P_{F}\left(\delta_{x_{1}}\right)\left(\cdot\right)=\sum_{y\in F}\left(K_{F}^{-1}\delta_{x_{1}}\right)\left(y\right)K_{y}\left(\cdot\right).\label{eq:pd6}
\end{equation}
\end{lem}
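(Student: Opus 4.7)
The plan is to identify $P_{F}\left(\delta_{x_{1}}\right)$ as the unique vector of $\mathscr{H}_{F}$ whose inner products against each $K_{x}$, $x\in F$, reproduce the values $\delta_{x_{1}}\left(x\right)$. Since $\mathscr{H}_{F}$ is finite-dimensional and spanned by $\left\{ K_{y}\right\} _{y\in F}$, this reduces to a finite linear system whose matrix is exactly the Gram matrix $K_{F}$ from \defref{d1}.

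Concretely, I would begin with the ansatz
\[
P_{F}\left(\delta_{x_{1}}\right)=\sum_{y\in F}c_{y}\,K_{y},\qquad c=\left(c_{y}\right)_{y\in F}\in\mathbb{C}^{F},
\]
and then, for each $x\in F$, combine the defining property of the orthogonal projection with the reproducing identity (\ref{eq:pd31}):
\[
\delta_{x_{1}}\left(x\right)=\left\langle K_{x},\delta_{x_{1}}\right\rangle _{\mathscr{H}}=\left\langle K_{x},P_{F}\left(\delta_{x_{1}}\right)\right\rangle _{\mathscr{H}}=\sum_{y\in F}K\left(x,y\right)c_{y}=\left(K_{F}c\right)\left(x\right).
\]
Viewing $\delta_{x_{1}}$ also as a column vector in $\mathbb{C}^{F}$ supported at $x_{1}$, this is exactly the linear system $K_{F}c=\delta_{x_{1}}$, so $c=K_{F}^{-1}\delta_{x_{1}}$, and substitution into the ansatz yields the formula (\ref{eq:pd6}).

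The only non-routine ingredient is the invertibility of $K_{F}$. The matrix $K_{F}$ is automatically positive semidefinite as the Gram matrix of $\left\{ K_{y}\right\} _{y\in F}\subset\mathscr{H}\left(K\right)$; it is strictly positive definite, hence invertible, as soon as the family $\left\{ K_{y}\right\} _{y\in F}$ is linearly independent in $\mathscr{H}$. I would handle a potential degeneracy by passing to a maximal linearly independent subset $F'\subset F$, for which $\mathscr{H}_{F}=\mathscr{H}_{F'}$ and the same derivation applies with $K_{F'}$ in place of $K_{F}$; in the intended setting (and in every example treated below) $K_{F}$ is already non-singular, so (\ref{eq:pd6}) reads as stated.
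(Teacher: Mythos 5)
Your proposal is correct and is essentially the paper's own argument: the orthogonality condition $\bigl\langle K_{x},\,\delta_{x_{1}}-\sum_{y\in F}c_{y}K_{y}\bigr\rangle _{\mathscr{H}}=0$ for all $x\in F$, which the paper's proof asks the reader to verify, is precisely the Gram system $K_{F}c=\delta_{x_{1}}$ that you solve. Your added remark on the invertibility of $K_{F}$ (reducing to a maximal linearly independent subfamily if needed) fills in a point the paper leaves implicit, but does not change the route.
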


\begin{proof}
Show that
\begin{equation}
\delta_{x_{1}}-\sum_{y\in F}\left(K_{F}^{-1}\delta_{x_{1}}\right)\left(y\right)K_{y}\left(\cdot\right)\in\mathscr{H}_{F}^{\perp}.\label{eq:pd7}
\end{equation}
The remaining part follows easily from this. 
\end{proof}
\begin{thm}
\label{thm:del}Let $V$ and $K$ as above, i.e., we assume that $V$
is countably infinite, and $K$ is a p.d. kernel on $V\times V$.
Let $\mathscr{H}=\mathscr{H}\left(K\right)$ be the corresponding
RKHS. Fix $x_{1}\in V$. Then the following three conditions are equivalent:

\begin{enumerate}
\item \label{enu:d1}$\delta_{x_{1}}\in\mathscr{H}$; 
\item \label{enu:d2}$\exists C_{x_{1}}<\infty$ such that for all $F\in\mathscr{F}\left(V\right)$,
we have
\begin{equation}
\left|\xi\left(x_{1}\right)\right|^{2}\leq C_{x_{1}}\underset{F\times F}{\sum\sum}\overline{\xi\left(x\right)}\xi\left(y\right)K\left(x,y\right).\label{eq:d1}
\end{equation}
\item \label{enu:d3}For $F\in\mathscr{F}\left(V\right)$, set 
\begin{equation}
K_{F}=\left(K\left(x,y\right)\right)_{\left(x,y\right)\in F\times F}\label{eq:d2}
\end{equation}
as a $\#F\times\#F$ matrix. Then
\begin{equation}
\sup_{F\in\mathscr{F}\left(V\right)}\left(K_{F}^{-1}\delta_{x_{1}}\right)\left(x_{1}\right)<\infty.\label{eq:d3}
\end{equation}
\end{enumerate}
\end{thm}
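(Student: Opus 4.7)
The plan is to derive (\ref{enu:d1})$\Leftrightarrow$(\ref{enu:d2}) immediately from the general RKHS characterization in \remref{rk}(\ref{enu:rh2}), and then to obtain (\ref{enu:d2})$\Leftrightarrow$(\ref{enu:d3}) from a sharp Cauchy--Schwarz computation that identifies the optimal constant in (\ref{eq:d1}) with $(K_F^{-1}\delta_{x_1})(x_1)$. A constructive third route, (\ref{enu:d3})$\Rightarrow$(\ref{enu:d1}) via the approximants produced in \lemref{proj1}, provides both a cross-check and a formula for $\delta_{x_1}$ as an $\mathscr{H}$-limit.

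For (\ref{enu:d1})$\Leftrightarrow$(\ref{enu:d2}), I apply the \emph{a priori} estimate (\ref{eq:rh2}) with $f=\delta_{x_1}$. For any finite $\{t_j\}\subset V$ and coefficients $(\xi_j)$, $\sum_j\xi_j\delta_{x_1}(t_j)$ equals $\xi(x_1)$ when $x_1$ appears among the $t_j$ and vanishes otherwise; since any $F\in\mathscr{F}(V)$ may be enlarged to contain $x_1$ without affecting either side, the criterion of \remref{rk}(\ref{enu:rh2}) for $\delta_{x_1}$ becomes exactly inequality (\ref{eq:d1}), with sharp constant $C_{x_1}=\|\delta_{x_1}\|_{\mathscr{H}}^{2}$.

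For (\ref{enu:d2})$\Leftrightarrow$(\ref{enu:d3}), fix $F\in\mathscr{F}(V)$ with $x_1\in F$. Positive definiteness of $K$ makes $K_F$ strictly positive definite on $\ell^{2}(F)$, hence invertible. Writing $\xi(x_1)=\langle\delta_{x_1},\xi\rangle_{\ell^{2}(F)}$ and substituting $\eta=K_F^{1/2}\xi$, the inequality (\ref{eq:d1}) becomes $|\langle K_F^{-1/2}\delta_{x_1},\eta\rangle|^{2}\leq C\|\eta\|^{2}$, and Cauchy--Schwarz yields the optimal constant
\[
C_F \;=\; \|K_F^{-1/2}\delta_{x_1}\|^{2} \;=\; \langle\delta_{x_1},K_F^{-1}\delta_{x_1}\rangle \;=\; (K_F^{-1}\delta_{x_1})(x_1),
\]
attained at $\xi=K_F^{-1}\delta_{x_1}$. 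Thus (\ref{enu:d2}) is equivalent to $\sup_{F}C_F<\infty$, which is (\ref{enu:d3}).

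To give the constructive (\ref{enu:d3})$\Rightarrow$(\ref{enu:d1}) directly, set $g_F:=\sum_{y\in F}(K_F^{-1}\delta_{x_1})(y)K_y\in\mathscr{H}_F$ as in \lemref{proj1}. Expanding the inner product gives $\|g_F\|_{\mathscr{H}}^{2}=\langle K_F^{-1}\delta_{x_1},K_F(K_F^{-1}\delta_{x_1})\rangle=(K_F^{-1}\delta_{x_1})(x_1)$, and $g_F(x)=(K_F\,K_F^{-1}\delta_{x_1})(x)=\delta_{x_1}(x)$ for every $x\in F$. For $F_1\subset F_2$ the difference $g_{F_2}-g_{F_1}$ therefore vanishes on $F_1$ and is orthogonal to $\mathscr{H}_{F_1}$, so $g_{F_1}=P_{F_1}g_{F_2}$ and the net $\{\|g_F\|\}$ is monotone. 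Under (\ref{enu:d3}) this bounded monotone net converges in $\mathscr{H}(K)$; pointwise convergence via the reproducing property identifies the limit with $\delta_{x_1}$, giving $\delta_{x_1}\in\mathscr{H}(K)$. The only step that requires genuine work is the sharp-constant identification $C_F=(K_F^{-1}\delta_{x_1})(x_1)$; the remaining content is bookkeeping around \remref{rk}(\ref{enu:rh2}) and the projection formula \lemref{proj1}.
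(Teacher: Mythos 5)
Your proof is correct, and it is essentially the fleshed-out version of the argument the paper merely points to: the published proof consists only of the sentence ``this is an application of Remark \ref{rem:rk}'' plus a citation, and your two main steps --- applying the \emph{a priori} estimate (\ref{eq:rh2}) with $f=\delta_{x_1}$ to get (\ref{enu:d1})$\Leftrightarrow$(\ref{enu:d2}), and the finite-section Cauchy--Schwarz computation identifying the optimal constant in (\ref{eq:d1}) with $(K_F^{-1}\delta_{x_1})(x_1)$ to get (\ref{enu:d2})$\Leftrightarrow$(\ref{enu:d3}) --- are exactly what that pointer intends, with the monotone-net construction of $\delta_{x_1}$ as a limit of the $g_F$ a sound bonus consistent with Lemma \ref{lem:proj1}. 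One small caveat: positive definiteness in the sense of Remark \ref{rem:rk}(1) only gives $K_F\geq 0$, not invertibility (consider $K\equiv 1$), so your claim that $K_F$ is ``strictly positive definite, hence invertible'' is an unproved extra hypothesis --- but it is one that condition (\ref{enu:d3}) and Lemma \ref{lem:proj1} already presuppose in order to make sense, and your argument survives verbatim if $K_F^{-1}$ is read as the Moore--Penrose pseudo-inverse throughout.
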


\begin{proof}
This is an application of \remref{rk}. Also see \cite{MR3450534}
for details.
\end{proof}
Let $D$ be an open domain in $\mathbb{R}^{d}$, and assume $V\subset D$
is countable and discrete subset of $D$. In this case, we shall consider
two positive definite kernels: the original kernel $K$ on $D\times D$,
and $K_{V}:=K\big|_{V\times V}$ on $V\times V$ by restriction. Thus
if $x\in V$, then 
\[
K_{x}^{\left(V\right)}\left(\cdot\right)=K\left(\cdot,x\right):V\longrightarrow\mathbb{R}
\]
is a function on $V$, while 
\[
K_{x}\left(\cdot\right):=K\left(\cdot,x\right):D\longrightarrow\mathbb{R}
\]
is a function on $D$. Further, let $\mathscr{H}$ and $\mathscr{H}_{V}$
be the associated RKHSs respectively. 
\begin{lem}
\label{lem:mc1}$\mathscr{H}_{V}$ is isometrically embedded into
$\mathscr{H}$ via the mapping
\[
J^{\left(V\right)}:K_{x}^{\left(V\right)}\longmapsto K_{x},\;x\in V.
\]
\end{lem}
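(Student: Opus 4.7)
The plan is to build the isometry $J^{(V)}$ by first defining it on the natural dense subspace $\mathrm{span}\{K_x^{(V)} : x \in V\} \subset \mathscr{H}_V$, then verifying that it preserves inner products there, and finally extending by continuity to the completion.

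First I would set, on finite linear combinations,
\[
J^{(V)}\Bigl(\sum_{x \in F} c_x K_x^{(V)}\Bigr) := \sum_{x \in F} c_x K_x \in \mathscr{H},
\]
for $F \in \mathscr{F}(V)$ and scalars $c_x$. The crux of the argument is the identity
\[
\Bigl\langle \sum_{x \in F} c_x K_x^{(V)}, \sum_{y \in F'} d_y K_y^{(V)} \Bigr\rangle_{\mathscr{H}_V}
= \sum_{x,y} \overline{c_x}\, d_y\, K(x,y)
= \Bigl\langle \sum_{x \in F} c_x K_x, \sum_{y \in F'} d_y K_y \Bigr\rangle_{\mathscr{H}},
\]
which follows directly from the defining inner product formula \eqref{eq:pd3} applied in both RKHSs, using that $K_V = K|_{V \times V}$ so the Gram matrix entries coincide. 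This immediately shows that $J^{(V)}$ is well-defined (i.e.\ independent of the representation of an element as a finite linear combination, since if the left-hand side is zero in $\mathscr{H}_V$ then its image has zero norm in $\mathscr{H}$) and isometric on the dense span.

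Next I would extend $J^{(V)}$ by continuity to all of $\mathscr{H}_V$: given $\varphi \in \mathscr{H}_V$, pick a Cauchy sequence $\varphi_n$ in $\mathrm{span}\{K_x^{(V)}\}$ with $\varphi_n \to \varphi$ in $\mathscr{H}_V$; then $J^{(V)}(\varphi_n)$ is Cauchy in $\mathscr{H}$ (by the isometric property on the dense subspace) and converges to some element which we call $J^{(V)}(\varphi)$. Standard arguments show the limit is independent of the approximating sequence, and the norm identity $\|J^{(V)}(\varphi)\|_{\mathscr{H}} = \|\varphi\|_{\mathscr{H}_V}$ passes to the limit.

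The only delicate point is checking that the image $J^{(V)}(\varphi)$ is the ``correct'' function on $D$ and, in particular, that its restriction to $V$ agrees with $\varphi$. For this I would use the reproducing property: for any $x \in V$,
\[
\bigl(J^{(V)}(\varphi)\bigr)(x) = \langle K_x, J^{(V)}(\varphi) \rangle_{\mathscr{H}} = \lim_n \langle K_x, J^{(V)}(\varphi_n) \rangle_{\mathscr{H}} = \lim_n \langle K_x^{(V)}, \varphi_n \rangle_{\mathscr{H}_V} = \varphi(x),
\]
where the second-to-last equality uses the computed inner product identity and the last uses reproducing in $\mathscr{H}_V$. This confirms the embedding acts as expected on values, and in particular $J^{(V)}$ is injective. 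I do not expect any real obstacle beyond carefully invoking the two reproducing properties and the common inner product formula on finite spans; the argument is essentially a universal-property computation for Hilbert completions.
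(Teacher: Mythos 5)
Your proposal is correct and follows essentially the same route as the paper: both arguments rest on the observation that the Gram matrices of $\{K_x^{(V)}\}_{x\in F}$ in $\mathscr{H}_V$ and of $\{K_x\}_{x\in F}$ in $\mathscr{H}$ coincide (since $K_V = K|_{V\times V}$), so $J^{(V)}$ is isometric on the dense span and extends by continuity to the completion. Your additional check that $J^{(V)}(\varphi)|_V = \varphi$ via the reproducing property is a welcome extra detail that the paper leaves implicit.
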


\begin{proof}
Assume $F\in\mathscr{F}\left(V\right)$, i.e., $F$ is a finite subset
of $V$. Let $\xi=\xi_{F}$ is a function on $F$, then 
\[
\left\Vert \sum\nolimits _{x\in F}\xi\left(x\right)K_{x}^{\left(V\right)}\right\Vert _{\mathscr{H}_{V}}=\left\Vert \sum\nolimits _{x\in F}\xi\left(x\right)K_{x}\right\Vert _{\mathscr{H}}.
\]

Note that, by definition, the linear span of $\{K_{x}^{\left(V\right)}\mathrel{;}x\in V\}$
is dense in $\mathscr{H}_{V}$, and the span of $\{K_{x}\mathrel{;}x\in D\}$
is dense in $\mathscr{H}$. We conclude that $J^{\left(V\right)}$
extends uniquely to an isometry from $\mathscr{H}_{V}$ into $\mathscr{H}$.
 The desired result follows from this. 
\end{proof}
In the examples below, we are concerned with cases of kernels $K:D\times D\rightarrow\mathbb{R}$
with restriction $K_{V}:V\times V\rightarrow\mathbb{R}$, where $V$
is a countable discrete subset of $D$. Typically, for $x\in V$,
we may have the restriction $\delta_{x}\big|_{V}$ contained in $\mathscr{H}_{V}$,
but $\delta_{x}$ in not in $\mathscr{H}$. 

\subsection{\label{subsec:bm}Brownian Motion}

Consider the covariance function of standard Brownian motion $B_{t}$,
$t\in[0,\infty)$, i.e., a Gaussian process $\left\{ B_{t}\right\} $
with mean zero and covariance function 
\begin{equation}
K\left(s,t\right):=\mathbb{E}\left(B_{s}B_{t}\right)=s\wedge t.\label{eq:bm1}
\end{equation}

Restrict to $V:=\left\{ 0\right\} \cup\mathbb{Z}_{+}\subset D$, i.e.,
consider 
\[
K^{\left(V\right)}=K\big|_{V\times V}.
\]
$\mathscr{H}\left(K\right)$: Cameron-Martin Hilbert space, consisting
of functions $f\in L^{2}\left(\mathbb{R}\right)$ s.t. 
\[
\int_{0}^{\infty}\left|f'\left(x\right)\right|^{2}dx<\infty,\quad f\left(0\right)=0.
\]
$\mathscr{H}_{V}:=\mathscr{H}\left(K_{V}\right)$. Note that 
\[
f\in\mathscr{H}\left(K_{V}\right)\Longleftrightarrow\sum_{n}\left|f\left(n\right)-f\left(n+1\right)\right|^{2}<\infty.
\]

We now show that the restriction of (\ref{eq:bm1}) to $V\times V$
for an ordered subset (we fix such a set $V$):
\begin{equation}
V:\;0<x_{1}<x_{2}<\cdots<x_{i}<x_{i+1}<\cdots\label{eq:bm2}
\end{equation}
has the discrete mass property (\defref{dmp}). 

Set $\mathscr{H}_{V}=RKHS(K\big|_{V\times V})$, 
\begin{equation}
K_{V}\left(x_{i},x_{j}\right)=x_{i}\wedge x_{j}.\label{eq:bm3}
\end{equation}
We consider the set $F_{n}=\left\{ x_{1},x_{2},\ldots,x_{n}\right\} $
of finite subsets of $V$, and 
\begin{equation}
K_{n}=K^{\left(F_{n}\right)}=\begin{bmatrix}x_{1} & x_{1} & x_{1} & \cdots & x_{1}\\
x_{1} & x_{2} & x_{2} & \cdots & x_{2}\\
x_{1} & x_{2} & x_{3} & \cdots & x_{3}\\
\vdots & \vdots & \vdots & \vdots & \vdots\\
x_{1} & x_{2} & x_{3} & \cdots & x_{n}
\end{bmatrix}=\left(x_{i}\wedge x_{j}\right)_{i,j=1}^{n}.\label{eq:bm4}
\end{equation}
We will show that condition (\ref{enu:d3}) in \thmref{del} holds
for $K_{V}$. 
\begin{lem}
~ 
\begin{equation}
D_{n}=\det\left(\left(x_{i}\wedge x_{j}\right)_{i,j=1}^{n}\right)=x_{1}\left(x_{2}-x_{1}\right)\left(x_{3}-x_{2}\right)\cdots\left(x_{n}-x_{n-1}\right).\label{eq:bm5}
\end{equation}
\end{lem}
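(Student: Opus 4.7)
The plan is to reduce the matrix $K_n = (x_i \wedge x_j)_{i,j=1}^n$ to upper triangular form by a single sweep of elementary row operations that preserve the determinant, after which the claim follows by reading off the diagonal entries.

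First I would record the entrywise form: since $V$ is ordered as in (\ref{eq:bm2}), the $(i,j)$ entry of $K_n$ equals $x_{\min(i,j)}$. In particular, row $i$ is
\[
R_i = (x_1,\, x_2,\, \ldots,\, x_{i-1},\, x_i,\, x_i,\, \ldots,\, x_i),
\]
with $x_i$ repeated in positions $i$ through $n$. Consequently, for $i \geq 2$, the difference $R_i - R_{i-1}$ has zeros in positions $1,\ldots,i-1$ (where both rows agree with $x_j$), and the constant value $x_i - x_{i-1}$ in positions $i,\ldots,n$.

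Next I would perform the row operations $R_i \leftarrow R_i - R_{i-1}$ for $i = 2, 3, \ldots, n$. Since each operation replaces a row by itself minus a strictly earlier row, the determinant is unchanged. The resulting matrix is upper triangular:
\[
\begin{pmatrix}
x_1 & x_1 & x_1 & \cdots & x_1 \\
0 & x_2-x_1 & x_2-x_1 & \cdots & x_2-x_1 \\
0 & 0 & x_3-x_2 & \cdots & x_3-x_2 \\
\vdots & & & \ddots & \vdots \\
0 & 0 & 0 & \cdots & x_n-x_{n-1}
\end{pmatrix}.
\]
Its determinant is the product of the diagonal entries, giving exactly (\ref{eq:bm5}).

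There is essentially no obstacle here: the only thing that could go wrong is getting the order of the row operations wrong (one must subtract in a sequence that does not feed modified rows back into subsequent subtractions), but doing $R_i \leftarrow R_i - R_{i-1}$ for $i$ increasing from $2$ to $n$ works because at the moment $R_i$ is modified, $R_{i-1}$ has already been updated to its final form, and the pattern of zeros is preserved. An alternative, if desired, is a short induction on $n$: expanding $\det K_n$ along the last column and using the fact that all entries in the last column except the $(n,n)$-entry coincide with those of column $n-1$ reduces it to $(x_n - x_{n-1}) D_{n-1}$, and the formula follows.
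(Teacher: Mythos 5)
Your approach---triangularizing $(x_i\wedge x_j)_{i,j=1}^n$ by subtracting consecutive rows and reading off the diagonal---is essentially the paper's own proof: the paper reduces the same matrix to $\mathrm{diag}(x_1,\,x_2-x_1,\,\dots,\,x_n-x_{n-1})$ by elementary operations (which it loosely calls a ``unitary equivalence''; it is really a congruence by a unipotent matrix). Your computation of the difference rows $R_i-R_{i-1}$, the displayed upper triangular matrix, and the alternative induction via the last column are all correct.

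One point in your justification is backwards, though. If you perform $R_i\leftarrow R_i-R_{i-1}$ for $i$ \emph{increasing} and each step uses the already-updated $R_{i-1}$, you do \emph{not} obtain the displayed triangular matrix: for $n=3$, after $R_2\leftarrow R_2-R_1=(0,\,x_2-x_1,\,x_2-x_1)$, the step $R_3\leftarrow R_3-R_2$ yields $(x_1,\,x_1,\,x_3-x_2+x_1)$, which has nonzero entries below the diagonal, so the determinant can no longer be read off as a product of diagonal entries. The subtracted row must be the \emph{original} $R_{i-1}$, which you get either by sweeping with $i$ decreasing from $n$ to $2$, or by performing all the subtractions at once, i.e.\ left-multiplying by the unipotent lower bidiagonal matrix with $-1$'s on the subdiagonal (determinant $1$). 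Either fix is one line, and the conclusion \eqref{eq:bm5} stands.
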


\begin{proof}
Induction. In fact, 
\[
\begin{bmatrix}x_{1} & x_{1} & x_{1} & \cdots & x_{1}\\
x_{1} & x_{2} & x_{2} & \cdots & x_{2}\\
x_{1} & x_{2} & x_{3} & \cdots & x_{3}\\
\vdots & \vdots & \vdots & \vdots & \vdots\\
x_{1} & x_{2} & x_{3} & \cdots & x_{n}
\end{bmatrix}\sim\begin{bmatrix}x_{1} & 0 & 0 & \cdots & 0\\
0 & x_{2}-x_{1} & 0 & \cdots & 0\\
0 & 0 & x_{3}-x_{2} & \cdots & 0\\
\vdots & \vdots & \vdots & \ddots & \vdots\\
0 & \cdots & 0 & \cdots & x_{n}-x_{n-1}
\end{bmatrix},
\]
unitary equivalence in finite dimensions.
\end{proof}

\begin{lem}
Let 
\begin{equation}
\zeta_{\left(n\right)}:=K_{n}^{-1}\left(\delta_{x_{1}}\right)\left(\cdot\right)\label{eq:bm7}
\end{equation}
so that 
\begin{equation}
\left\Vert P_{F_{n}}\left(\delta_{x_{1}}\right)\right\Vert _{\mathscr{H}_{V}}^{2}=\zeta_{\left(n\right)}\left(x_{1}\right).\label{eq:bm8}
\end{equation}
Then, 
\begin{align*}
\zeta_{\left(1\right)}\left(x_{1}\right) & =\frac{1}{x_{1}}\\
\zeta_{\left(n\right)}\left(x_{1}\right) & =\frac{x_{2}}{x_{1}\left(x_{2}-x_{1}\right)},\quad\text{for}\;n=2,3,\ldots,
\end{align*}
and 
\[
\left\Vert \delta_{x_{1}}\right\Vert _{\mathscr{H}_{V}}^{2}=\frac{x_{2}}{x_{1}\left(x_{2}-x_{1}\right)}.
\]
\end{lem}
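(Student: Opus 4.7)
The plan is to identify $\zeta_{(n)}(x_1)$ with the $(1,1)$-entry of the inverse Gram matrix $K_n^{-1}$, compute this entry via cofactors, and then pass to the supremum using Theorem~\ref{thm:del} (or, equivalently, the monotone convergence of the nested orthogonal projections $P_{F_n}$).

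First I would verify that $\zeta_{(n)}(x_1) = \|P_{F_n}(\delta_{x_1})\|_{\mathscr{H}_V}^2$ really equals the $(1,1)$-entry $(K_n^{-1})_{1,1}$. Using Lemma~\ref{lem:proj1}, $P_{F_n}(\delta_{x_1}) = \sum_{y\in F_n}(K_n^{-1}\delta_{x_1})(y)K_y$, and then the reproducing property gives
\[
\|P_{F_n}(\delta_{x_1})\|_{\mathscr{H}_V}^2 = \langle P_{F_n}\delta_{x_1},\delta_{x_1}\rangle = (K_n^{-1}\delta_{x_1})(x_1) = (K_n^{-1})_{1,1}.
\]
The cases $n=1$ and $n=2$ are then a one-line matrix inversion: $K_1=[x_1]$ gives $1/x_1$, and inverting the $2\times 2$ matrix $\bigl(\begin{smallmatrix}x_1 & x_1\\ x_1 & x_2\end{smallmatrix}\bigr)$ gives $(K_2^{-1})_{1,1}=x_2/(x_1(x_2-x_1))$.

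For $n\geq 3$ I would use the cofactor formula
\[
(K_n^{-1})_{1,1} = \frac{\det\bigl((x_i\wedge x_j)_{i,j=2}^{n}\bigr)}{\det\bigl((x_i\wedge x_j)_{i,j=1}^{n}\bigr)}.
\]
Both numerator and denominator are now computed by the preceding determinant lemma (\ref{eq:bm5}): the denominator equals $x_1(x_2-x_1)(x_3-x_2)\cdots(x_n-x_{n-1})$, and the numerator, obtained by applying the same formula to the shifted sequence $x_2<x_3<\cdots<x_n$, equals $x_2(x_3-x_2)\cdots(x_n-x_{n-1})$. The telescoping cancellation of the factors $(x_3-x_2)\cdots(x_n-x_{n-1})$ leaves exactly $x_2/(x_1(x_2-x_1))$, independently of $n\geq 2$.

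Finally, for the norm of $\delta_{x_1}$ itself, I invoke either condition~(\ref{enu:d3}) of Theorem~\ref{thm:del}, which gives $\|\delta_{x_1}\|_{\mathscr{H}_V}^2 = \sup_F(K_F^{-1}\delta_{x_1})(x_1)$, or equivalently the monotone increase of $\|P_{F_n}\delta_{x_1}\|^2$ as $F_n\nearrow V$. Either way, since $\zeta_{(n)}(x_1)$ is constant equal to $x_2/(x_1(x_2-x_1))$ for all $n\geq 2$ and the $F_n$ exhaust~$V$, the supremum equals this common value, yielding the claimed formula. The only mild subtlety is the book-keeping in the cofactor step — recognizing that the $(1,1)$-minor is again a matrix of the same ``$x_i\wedge x_j$'' form, so that the prior determinant lemma applies verbatim; once that is noted, the rest is pure cancellation.
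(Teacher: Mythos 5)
Your proposal is correct and follows essentially the same route as the paper: both identify $\zeta_{(n)}(x_1)$ with the $(1,1)$-entry of $K_n^{-1}$, compute it as the ratio $\det\bigl((x_i\wedge x_j)_{i,j=2}^{n}\bigr)/\det\bigl((x_i\wedge x_j)_{i,j=1}^{n}\bigr)$ using the preceding determinant lemma, and let the telescoping factors cancel to get $x_2/(x_1(x_2-x_1))$ for all $n\geq 2$. Your added justification of $\zeta_{(n)}(x_1)=\|P_{F_n}\delta_{x_1}\|^2$ and the explicit appeal to the supremum condition of Theorem~\ref{thm:del} merely spell out steps the paper leaves implicit.
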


\begin{proof}
A direct computation shows the $\left(1,1\right)$ minor of the matrix
$K_{n}^{-1}$ is
\begin{equation}
D'_{n-1}=\det\left(\left(x_{i}\wedge x_{j}\right)_{i,j=2}^{n}\right)=x_{2}\left(x_{3}-x_{2}\right)\left(x_{4}-x_{3}\right)\cdots\left(x_{n}-x_{n-1}\right)\label{eq:bm6}
\end{equation}
and so 
\begin{align*}
\zeta_{\left(1\right)}\left(x_{1}\right) & =\frac{1}{x_{1}},\quad\mbox{and}\\
\zeta_{\left(2\right)}\left(x_{1}\right) & =\frac{x_{2}}{x_{1}\left(x_{2}-x_{1}\right)}\\
\zeta_{\left(3\right)}\left(x_{1}\right) & =\frac{x_{2}\left(x_{3}-x_{2}\right)}{x_{1}\left(x_{2}-x_{1}\right)\left(x_{3}-x_{2}\right)}=\frac{x_{2}}{x_{1}\left(x_{2}-x_{1}\right)}\\
\zeta_{\left(4\right)}\left(x_{1}\right) & =\frac{x_{2}\left(x_{3}-x_{2}\right)\left(x_{4}-x_{3}\right)}{x_{1}\left(x_{2}-x_{1}\right)\left(x_{3}-x_{2}\right)\left(x_{4}-x_{3}\right)}=\frac{x_{2}}{x_{1}\left(x_{2}-x_{1}\right)}\\
 & \vdots
\end{align*}
The result follows from this.
\end{proof}
\begin{cor}
\label{cor:proj}$P_{F_{n}}\left(\delta_{x_{1}}\right)=P_{F_{2}}\left(\delta_{x_{1}}\right)$,
$\forall n\geq2$. Therefore, 
\begin{equation}
\delta_{x_{1}}\in\mathscr{H}_{V}^{\left(F_{2}\right)}:=span\{K_{x_{1}}^{\left(V\right)},K_{x_{2}}^{\left(V\right)}\}
\end{equation}
and
\begin{equation}
\delta_{x_{1}}=\zeta_{\left(2\right)}\left(x_{1}\right)K_{x_{1}}^{\left(V\right)}+\zeta_{\left(2\right)}\left(x_{2}\right)K_{x_{2}}^{\left(V\right)}
\end{equation}
where 
\[
\zeta_{\left(2\right)}\left(x_{i}\right)=K_{2}^{-1}\left(\delta_{x_{1}}\right)\left(x_{i}\right),\;i=1,2.
\]
Specifically, 
\begin{align}
\zeta_{\left(2\right)}\left(x_{1}\right) & =\frac{x_{2}}{x_{1}\left(x_{2}-x_{1}\right)}\\
\zeta_{\left(2\right)}\left(x_{2}\right) & =\frac{-1}{x_{2}-x_{1}};
\end{align}
and 
\begin{equation}
\left\Vert \delta_{x_{1}}\right\Vert _{\mathscr{H}_{V}}^{2}=\frac{x_{2}}{x_{1}\left(x_{2}-x_{1}\right)}.\label{eq:dn}
\end{equation}
\end{cor}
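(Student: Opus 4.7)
The plan is to read off the corollary directly from the preceding lemma, using the fact that the norms $\|P_{F_n}(\delta_{x_1})\|_{\mathscr{H}_V}^2$ stabilize once $n\ge 2$. Specifically, the preceding lemma gives
\[
\|P_{F_n}(\delta_{x_1})\|_{\mathscr{H}_V}^2 = \zeta_{(n)}(x_1) = \frac{x_2}{x_1(x_2-x_1)}, \qquad n\ge 2,
\]
so the left-hand side is independent of $n\ge 2$.

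First I would exploit the nesting $F_2\subset F_n$, which forces $\mathscr{H}_V^{(F_2)}\subset \mathscr{H}_V^{(F_n)}$ and hence $P_{F_2}=P_{F_2}P_{F_n}$. Applied to $\delta_{x_1}$, Pythagoras gives
\[
\|P_{F_n}(\delta_{x_1})\|_{\mathscr{H}_V}^2 = \|P_{F_2}(\delta_{x_1})\|_{\mathscr{H}_V}^2 + \|P_{F_n}(\delta_{x_1})-P_{F_2}(\delta_{x_1})\|_{\mathscr{H}_V}^2.
\]
Since both squared norms on the left equal $x_2/(x_1(x_2-x_1))$, the last term vanishes, so $P_{F_n}(\delta_{x_1})=P_{F_2}(\delta_{x_1})$ for every $n\ge 2$, which is the first assertion.

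Next I would use $\bigcup_n F_n=V$, hence $\bigcup_n \mathscr{H}_V^{(F_n)}$ is dense in $\mathscr{H}_V$, so $P_{F_n}\to I$ strongly on $\mathscr{H}_V$. By \thmref{del} the uniform bound $\sup_n \zeta_{(n)}(x_1)<\infty$ established in the preceding lemma guarantees $\delta_{x_1}\in\mathscr{H}_V$, so $P_{F_n}(\delta_{x_1})\to \delta_{x_1}$ in $\mathscr{H}_V$-norm. Combined with the stabilization above, this yields $\delta_{x_1}=P_{F_2}(\delta_{x_1})\in \mathscr{H}_V^{(F_2)}$.

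Finally, the explicit coefficients come from \lemref{proj1} applied with $F=F_2$: writing
\[
K_{F_2}=\begin{pmatrix} x_1 & x_1\\ x_1 & x_2\end{pmatrix}, \qquad K_{F_2}^{-1}=\frac{1}{x_1(x_2-x_1)}\begin{pmatrix} x_2 & -x_1\\ -x_1 & x_1\end{pmatrix},
\]
a one-line matrix computation gives $\zeta_{(2)}(x_1)=x_2/(x_1(x_2-x_1))$ and $\zeta_{(2)}(x_2)=-1/(x_2-x_1)$. The norm identity (\ref{eq:dn}) then follows from the reproducing property: $\|\delta_{x_1}\|_{\mathscr{H}_V}^2=\langle \delta_{x_1},\delta_{x_1}\rangle = \zeta_{(2)}(x_1)\delta_{x_1}(x_1)+\zeta_{(2)}(x_2)\delta_{x_1}(x_2)=\zeta_{(2)}(x_1)$. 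There is no real obstacle here; the only point that needs a little care is justifying the strong convergence $P_{F_n}(\delta_{x_1})\to \delta_{x_1}$, which is why I invoke \thmref{del} to know a priori that $\delta_{x_1}\in \mathscr{H}_V$.
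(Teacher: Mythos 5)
Your argument is correct and follows essentially the same route as the paper: the paper's own proof just records that $\zeta_{(n)}(x_1)=\left\Vert P_{F_n}(\delta_{x_1})\right\Vert_{\mathscr{H}}^{2}$ is nondecreasing in $n$ and stabilizes at $n=2$, which is exactly the stabilization you exploit. Your write-up supplies details the paper leaves implicit --- the Pythagoras step giving $P_{F_n}(\delta_{x_1})=P_{F_2}(\delta_{x_1})$, the strong convergence $P_{F_n}\to I$ combined with $\delta_{x_1}\in\mathscr{H}_V$ from Theorem \ref{thm:del}, and the explicit $2\times 2$ inverse of $K_{F_2}$ --- and all of these check out.
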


\begin{proof}
Note that 
\[
\zeta_{n}\left(x_{1}\right)=\left\Vert P_{F_{n}}\left(\delta_{x_{1}}\right)\right\Vert _{\mathscr{H}}^{2}
\]
and $\zeta_{\left(1\right)}\left(x_{1}\right)\leq\zeta_{\left(2\right)}\left(x_{1}\right)\leq\cdots$,
since $F_{n}=\left\{ x_{1},x_{2},\ldots,x_{n}\right\} $. In particular,
$\frac{1}{x_{1}}\leq\frac{x_{2}}{x_{1}\left(x_{2}-x_{1}\right)}$,
which yields (\ref{eq:dn}). 
\end{proof}
\begin{rem}
We showed that $\delta_{x_{1}}\in\mathscr{H}_{V}$, $V=\left\{ x_{1}<x_{2}<\cdots\right\} \subset\mathbb{R}_{+}$,
with the restriction of $s\wedge t$ = the covariance kernel of Brownian
motion. The same argument also shows that $\delta_{x_{i}}\in\mathscr{H}_{V}$
when $i>1$.

Conclusions: 
\begin{align}
\delta_{x_{i}} & \in span\left\{ K_{x_{i-1}}^{\left(V\right)},K_{x_{i}}^{\left(V\right)},K_{x_{i+1}}^{\left(V\right)}\right\} ,\quad\mbox{and}\\
\left\Vert \delta_{x_{i}}\right\Vert _{\mathscr{H}}^{2} & =\frac{x_{i+1}-x_{i-1}}{\left(x_{i}-x_{i-1}\right)\left(x_{i+1}-x_{i}\right)}.
\end{align}
Details are left for the interested readers.
\end{rem}

\begin{cor}
Let $V\subset\mathbb{R}_{+}$ be countable. If $x_{a}\in V$ is an
accumulation point (from $V$), then $\left\Vert \delta_{a}\right\Vert _{\mathscr{H}_{V}}=\infty$. 
\end{cor}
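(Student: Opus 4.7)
The plan is to apply \thmref{del}(iii): $\delta_{x_a}\in\mathscr{H}_V$ if and only if $\sup_{F\in\mathscr{F}(V)}(K_F^{-1}\delta_{x_a})(x_a)<\infty$. It therefore suffices to exhibit a sequence $F_n\in\mathscr{F}(V)$ along which this quantity diverges. Since $x_a$ is an accumulation point of $V$, I can pick $\{y_n\}\subset V\setminus\{x_a\}$ with $y_n\to x_a$, and after passing to a subsequence assume either $y_n<x_a$ for all $n$, or $y_n>x_a$ for all $n$.

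The main step is then to take the two-point subsets $F_n=\{y_n,x_a\}$ and compute $\zeta_n:=(K_{F_n}^{-1}\delta_{x_a})(x_a)$ by inverting the $2\times 2$ kernel matrix $K_{F_n}=(\min(u,v))_{u,v\in F_n}$. When $y_n<x_a$ the matrix is $\bigl(\begin{smallmatrix}y_n&y_n\\y_n&x_a\end{smallmatrix}\bigr)$ with determinant $y_n(x_a-y_n)$, and a routine inversion yields $\zeta_n=1/(x_a-y_n)$; when $y_n>x_a$ the analogous computation gives $\zeta_n=y_n/(x_a(y_n-x_a))$. In either case $\zeta_n\to\infty$ as $y_n\to x_a$, and by \thmref{del} this forces $\delta_{x_a}\notin\mathscr{H}_V$, i.e.\ $\|\delta_{x_a}\|_{\mathscr{H}_V}=\infty$.

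There is no serious obstacle; the argument is essentially a one-line reduction to \thmref{del} plus a $2\times 2$ matrix calculation. The one conceptual point worth emphasizing is that the sets $F_n$ need not be nested: by \lemref{proj1} the quantity $\zeta_F$ equals the squared projection norm $\|P_F(\delta_{x_a})\|_{\mathscr{H}_V}^2$ for any $F\ni x_a$, and this is bounded by $\|\delta_{x_a}\|_{\mathscr{H}_V}^2$ whenever $\delta_{x_a}\in\mathscr{H}_V$. Hence one-sided accumulation by a single nearby neighbor already suffices --- no three-point configuration nor two-sided approach is required.
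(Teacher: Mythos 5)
Your argument is correct. The paper states this corollary without proof; the intended justification is clearly the formula in the preceding remark, $\left\Vert \delta_{x_{i}}\right\Vert _{\mathscr{H}_V}^{2}=\tfrac{x_{i+1}-x_{i-1}}{(x_{i}-x_{i-1})(x_{i+1}-x_{i})}$, which blows up as a neighbor approaches $x_i$ --- but that formula presupposes an ordered enumeration $x_1<x_2<\cdots$ with well-defined nearest neighbors, which a countable set with an accumulation point need not admit near $x_a$. Your reduction to two-point subsets $F=\{y,x_a\}$ sidesteps this: the $2\times 2$ inversions are right (I checked both, $\zeta=1/(x_a-y)$ for $y<x_a$ and $\zeta=y/(x_a(y-x_a))$ for $y>x_a$, the latter using $x_a>0$, which holds since $V\subset\mathbb{R}_{+}$), and the contrapositive of \thmref{del}(\ref{enu:d1})$\Rightarrow$(\ref{enu:d3}) then gives $\delta_{x_a}\notin\mathscr{H}_V$ directly, with no nesting or monotonicity of the $F_n$ needed --- your closing remark via \lemref{proj1} is a nice consistency check but is not even required for the logic. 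So what you buy over the paper's implicit route is a proof that genuinely covers arbitrary countable $V$ (including, say, $V$ dense in an interval), at essentially no extra cost.
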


\begin{example}[Sparse sample-points]
Let $V=\left\{ x_{i}\right\} _{i=1}^{\infty}$, where 
\[
x_{i}=\frac{i\left(i-1\right)}{2},\quad i\in\mathbb{N}.
\]
It follows that $x_{i+1}-x_{i}=i$, and so 
\[
\left\Vert \delta_{x_{i}}\right\Vert _{\mathscr{H}}^{2}=\frac{x_{i+1}-x_{i}}{\left(x_{i}-x_{i-1}\right)\left(x_{i+1}-x_{i}\right)}=\frac{2i-1}{\left(i-1\right)i}\xrightarrow[i\rightarrow\infty]{}0.
\]
We conclude that $\left\Vert \delta_{x_{i}}\right\Vert _{\mathscr{H}}\xrightarrow[i\rightarrow\infty]{}0$
if the set $V=\left\{ x_{i}\right\} _{i=1}^{\infty}\subset\mathbb{R}_{+}$
is sparse. 
\end{example}

Now, some general facts:
\begin{lem}
Let $K:V\times V\rightarrow\mathbb{C}$ be p.d., and let $\mathscr{H}$
be the corresponding RKHS. If $x_{1}\in V$, and if $\delta_{x_{1}}$
has a representation as follows:
\begin{equation}
\delta_{x_{1}}=\sum_{y\in V}\zeta^{\left(x_{1}\right)}\left(y\right)K_{y}\;,\label{eq:pr1}
\end{equation}
then
\begin{equation}
\left\Vert \delta_{x_{1}}\right\Vert _{\mathscr{H}}^{2}=\zeta^{\left(x_{1}\right)}\left(x_{1}\right).\label{eq:pr2}
\end{equation}
\end{lem}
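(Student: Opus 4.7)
The plan is to reduce this to a one-line application of the reproducing property, once the convergence in \eqref{eq:pr1} is interpreted correctly. I will read the expansion $\delta_{x_{1}}=\sum_{y\in V}\zeta^{(x_{1})}(y)K_{y}$ as a series converging in the $\mathscr{H}$-norm (as in \lemref{fr} and throughout Section 2), so that the $\mathscr{H}$-inner product may be exchanged with the sum by continuity.

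First, I would compute $\left\Vert \delta_{x_{1}}\right\Vert _{\mathscr{H}}^{2}=\langle\delta_{x_{1}},\delta_{x_{1}}\rangle_{\mathscr{H}}$ by leaving the left-hand slot as $\delta_{x_{1}}$ and substituting the expansion \eqref{eq:pr1} into the right-hand slot:
\[
\left\Vert \delta_{x_{1}}\right\Vert _{\mathscr{H}}^{2}=\left\langle \delta_{x_{1}},\sum_{y\in V}\zeta^{(x_{1})}(y)K_{y}\right\rangle _{\mathscr{H}}=\sum_{y\in V}\zeta^{(x_{1})}(y)\langle\delta_{x_{1}},K_{y}\rangle_{\mathscr{H}}.
\]
Next, the reproducing property \eqref{eq:pd31} (applied to the vector $\delta_{x_{1}}\in\mathscr{H}$ and the kernel element $K_{y}$) gives $\langle K_{y},\delta_{x_{1}}\rangle_{\mathscr{H}}=\delta_{x_{1}}(y)$, which equals $1$ if $y=x_{1}$ and $0$ otherwise. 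Since these values are real, $\langle\delta_{x_{1}},K_{y}\rangle_{\mathscr{H}}=\overline{\delta_{x_{1}}(y)}=\delta_{x_{1}}(y)$ as well, so only the single term $y=x_{1}$ survives, yielding $\left\Vert \delta_{x_{1}}\right\Vert _{\mathscr{H}}^{2}=\zeta^{(x_{1})}(x_{1})$, which is \eqref{eq:pr2}.

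The only real obstacle is making the interchange of sum and inner product above rigorous when $V$ is infinite. I would handle this by invoking the standing assumption that \eqref{eq:pr1} converges in $\mathscr{H}$ and then using continuity of $\langle\delta_{x_{1}},\cdot\rangle_{\mathscr{H}}$: write the series as a norm-limit of finite partial sums $s_{F}=\sum_{y\in F}\zeta^{(x_{1})}(y)K_{y}$ over $F\in\mathscr{F}(V)$, apply the reproducing property for each finite $F$ to obtain $\langle\delta_{x_{1}},s_{F}\rangle_{\mathscr{H}}=\sum_{y\in F}\zeta^{(x_{1})}(y)\delta_{x_{1}}(y)$, and pass to the limit. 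As a sanity check, the identity is consistent with \corref{proj} and the explicit Brownian computation \eqref{eq:dn}, where the Brownian coefficient $\zeta_{(2)}(x_{1})=x_{2}/(x_{1}(x_{2}-x_{1}))$ was both identified as the $(x_{1})$-value of the coefficient function and as $\left\Vert \delta_{x_{1}}\right\Vert _{\mathscr{H}_{V}}^{2}$.
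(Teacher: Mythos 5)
Your proposal is correct and is essentially the paper's own proof: the paper's one-line argument is precisely to substitute the expansion \eqref{eq:pr1} into $\left\langle \delta_{x_{1}},\cdot\right\rangle _{\mathscr{H}}$ and invoke the reproducing property, which is what you carry out (with the convergence interchange made explicit).
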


\begin{proof}
Substitute both sides of (\ref{eq:pr1}) into $\left\langle \delta_{x_{1}},\cdot\right\rangle _{\mathscr{H}}$
where $\left\langle \cdot,\cdot\right\rangle _{\mathscr{H}}$ denotes
the inner product in $\mathscr{H}$. 
\end{proof}

\subsection{Brownian Bridge}

Let $D:=\left(0,1\right)=$ the open interval $0<t<1$, and set 
\begin{equation}
K_{bridge}\left(s,t\right):=s\wedge t-st;\label{eq:bb1}
\end{equation}
then (\ref{eq:bb1}) is the covariance function for the Brownian bridge
$B_{bri}\left(t\right)$, i.e., 
\begin{equation}
B_{bri}\left(0\right)=B_{bri}\left(1\right)=0\label{eq:bb2}
\end{equation}

\begin{figure}[H]
\includegraphics[width=0.35\columnwidth]{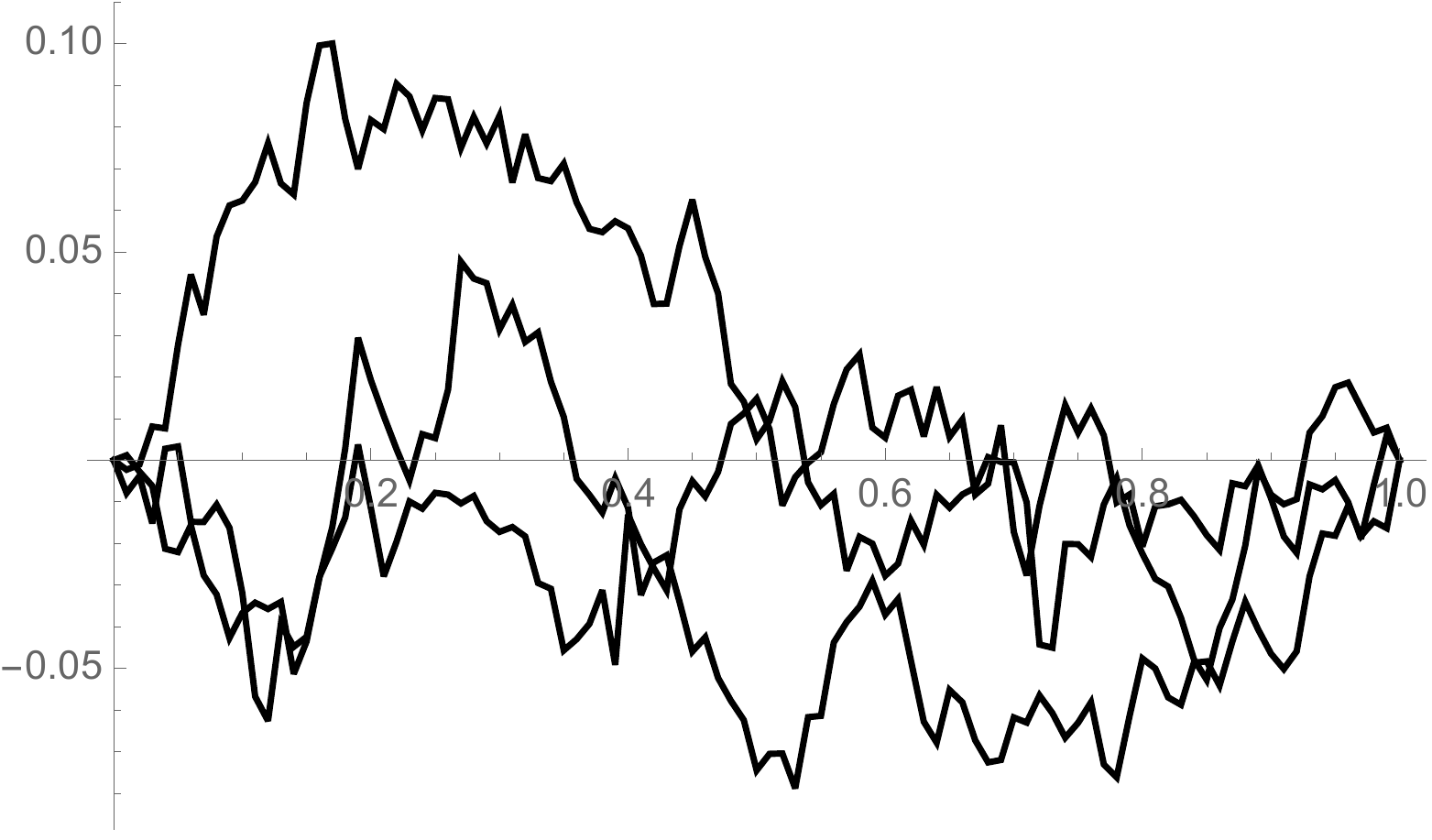}

\caption{\label{fig:bb}Brownian bridge $B_{bri}\left(t\right)$, a simulation
of three sample paths of the Brownian bridge.}
\end{figure}

\begin{equation}
B_{bri}\left(t\right)=\left(1-t\right)B\left(\frac{t}{1-t}\right),\quad0<t<1;\label{eq:bb3}
\end{equation}
where $B\left(t\right)$ is Brownian motion; see \lemref{mc1}.

The corresponding Cameron-Martin space is now 
\begin{equation}
\mathscr{H}_{bri}=\left\{ f\;\mbox{on}\:\left[0,1\right];f'\in L^{2}\left(0,1\right),f\left(0\right)=f\left(1\right)=0\right\} \label{eq:bb4}
\end{equation}
with 
\begin{equation}
\left\Vert f\right\Vert _{\mathscr{H}_{bri}}^{2}:=\int_{0}^{1}\left|f'\left(s\right)\right|^{2}ds<\infty.\label{eq:bb5}
\end{equation}

If $V=\left\{ x_{i}\right\} _{i=1}^{\infty}$, $x_{1}<x_{2}<\cdots<1$,
is the discrete subset of $D$, then we have for $F_{n}\in\mathscr{F}\left(V\right)$,
$F_{n}=\left\{ x_{1},x_{2},\cdots,x_{n}\right\} $, 
\begin{equation}
K_{F_{n}}=\left(K_{bridge}\left(x_{i},x_{j}\right)\right)_{i,j=1}^{n},\label{eq:bb6}
\end{equation}
see (\ref{eq:bb1}), and 
\begin{equation}
\det K_{F_{n}}=x_{1}\left(x_{2}-x_{1}\right)\cdots\left(x_{n}-x_{n-1}\right)\left(1-x_{n}\right).\label{eq:bb7}
\end{equation}

As a result, we get $\delta_{x_{i}}\in\mathscr{H}_{V}^{\left(bri\right)}$
for all $i$, and 
\[
\left\Vert \delta_{x_{i}}\right\Vert _{\mathscr{H}_{V}^{\left(bri\right)}}^{2}=\frac{x_{i+1}-x_{i-1}}{\left(x_{i+1}-x_{i}\right)\left(x_{i}-x_{i-1}\right)}.
\]
Note $\lim_{x_{i}\rightarrow1}\left\Vert \delta_{x_{i}}\right\Vert _{\mathscr{H}_{V}^{\left(bri\right)}}^{2}=\infty$. 

\subsection{Binomial RKHS}

The purpose of the present subsection if to display a concrete RKHS
$\mathscr{H}\left(K\right)$ in the discrete framework with the property
that $\mathscr{H}\left(K\right)$ does not contain the Dirac masses
$\delta_{x}$. The RKHS in question is generated by the binomial coefficients,
and it is relevant for a host of applications; see e.g., \cite{MR3484546,MR3228856,MR1822449}.
\begin{defn}
Let $V=\mathbb{Z}_{+}\cup\left\{ 0\right\} $; and 
\[
K_{b}\left(x,y\right):=\sum_{n=0}^{x\wedge y}\binom{x}{n}\binom{y}{n},\quad\left(x,y\right)\in V\times V.
\]
where $\binom{x}{n}=\frac{x\left(x-1\right)\cdots\left(x-n+1\right)}{n!}$
denotes the standard binomial coefficient from the binomial expansion.

Let $\mathscr{H}=\mathscr{H}\left(K_{b}\right)$ be the corresponding
RKHS. Set 
\begin{equation}
e_{n}\left(x\right)=\begin{cases}
\binom{x}{n} & \text{if \ensuremath{n\leq x}}\\
0 & \text{if \ensuremath{n>x}}.
\end{cases}\label{eq:b1}
\end{equation}
\end{defn}

\begin{lem}[\cite{MR3367659}]
\label{lem:b1}~

\begin{enumerate}
\item $e_{n}\left(\cdot\right)\in\mathscr{H}$, $n\in V$; 
\item $\left\{ e_{n}\right\} _{n\in V}$ is an orthonormal basis (ONB) in
the Hilbert space $\mathscr{H}$. 
\item Given $f\in\mathscr{F}unc\left(V\right)$; then 
\begin{equation}
f\in\mathscr{H}\Longleftrightarrow\sum_{k=0}^{\infty}\left|\left\langle e_{k},f\right\rangle _{\mathscr{H}}\right|^{2}<\infty;\label{eq:b4}
\end{equation}
and, in this case, 
\[
\left\Vert f\right\Vert _{\mathscr{H}}^{2}=\sum_{k=0}^{\infty}\left|\left\langle e_{k},f\right\rangle _{\mathscr{H}}\right|^{2}.
\]
\item Set $F_{n}=\left\{ 0,1,2,\ldots,n\right\} $, and 
\begin{equation}
P_{F_{n}}=\sum_{k=0}^{n}\left|e_{k}\left\rangle \right\langle e_{k}\right|\label{eq:b2}
\end{equation}
or equivalently 
\begin{equation}
P_{F_{n}}f=\sum_{k=0}^{n}\left\langle e_{k},f\right\rangle _{\mathscr{H}}e_{k}\,.\label{eq:b3}
\end{equation}
Then formula (\ref{eq:b3}) is well defined for all functions $f\in\mathscr{F}unc\left(V\right)$. 
\end{enumerate}
\end{lem}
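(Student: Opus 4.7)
The central observation is the Mercer-style identity
\[
K_b(x,y) \;=\; \sum_{n=0}^{\infty} e_n(x)\,e_n(y), \qquad (x,y)\in V\times V,
\]
where the sum is pointwise finite because $e_n(x)=0$ whenever $n>x$; this identity underpins every part of the lemma. I would carry out the four claims in the order (1), (2), (3), (4), so that the inner products and the notion of ONB are well-defined at the right moments. For (1), I apply the intrinsic RKHS criterion of Remark~\ref{rem:rk}(ii): for any finite choice $\{t_j\}\subset V$ and $\{\xi_j\}\subset\mathbb{C}$ the expansion above gives
\[
\sum_{i,j}\overline{\xi_i}\xi_j\, K_b(t_i,t_j) \;=\; \sum_{m=0}^{\infty}\Bigl|\sum_j \xi_j\, e_m(t_j)\Bigr|^{2} \;\geq\; \Bigl|\sum_j \xi_j\, e_n(t_j)\Bigr|^{2},
\]
which is precisely the a priori estimate (\ref{eq:rh2}) with $C=1$, so $e_n\in\mathscr{H}$ and $\|e_n\|_{\mathscr{H}}\le 1$.

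For orthonormality in (2), I would combine the reproducing property with the finite kernel expansion $K_x = \sum_{k=0}^{x} e_k(x)\,e_k$, which now makes sense in $\mathscr{H}$:
\[
e_n(x) \;=\; \langle K_x, e_n\rangle_{\mathscr{H}} \;=\; \sum_{k=0}^{x} e_k(x)\,\langle e_k, e_n\rangle_{\mathscr{H}}, \qquad x\in V.
\]
The triangular matrix $\bigl(\binom{x}{k}\bigr)_{0\le k,x\le N}$ has unit diagonal, hence is invertible; sweeping $x=0,1,2,\ldots$ therefore determines the Gram entries uniquely, forcing $\langle e_k,e_n\rangle_{\mathscr{H}}=\delta_{kn}$. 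Density of $\mathrm{span}\{e_n\}$ in $\mathscr{H}$ is then immediate, since each $K_x$ is a finite linear combination of the $e_k$'s and $\mathrm{span}\{K_x:x\in V\}$ is total in every RKHS, so $\{e_n\}_{n\in V}$ is an ONB. Claim (3) is then just Parseval's identity attached to this ONB.

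For (4), the key point is that the coefficients $\langle e_k,f\rangle_{\mathscr{H}}$ admit a purely combinatorial expression that survives beyond $\mathscr{H}$. Inverting the triangular relation $f(x)=\sum_{k=0}^{x}\langle e_k,f\rangle_{\mathscr{H}}\binom{x}{k}$ (valid for $f\in\mathscr{H}$ by (2)) via standard binomial inversion yields
\[
\langle e_k,f\rangle_{\mathscr{H}} \;=\; \sum_{j=0}^{k}(-1)^{k-j}\binom{k}{j}f(j) \;=\; (\Delta^{k}f)(0),
\]
a quantity depending only on $f(0),\ldots,f(k)$ and therefore well-defined for every $f\in\mathscr{F}unc(V)$; substituting into (\ref{eq:b3}) produces the promised extension of $P_{F_n}$ to arbitrary functions on $V$. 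The main obstacle, modest as it is, lies in step (2): the Gram entries $\langle e_k,e_n\rangle_{\mathscr{H}}$ must first be defined (via (1)), and only then does the triangular-invertibility observation convert the reproducing identity into orthonormality; after that, everything is bookkeeping around the Mercer identity.
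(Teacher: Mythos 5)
Your proposal is correct, and it is worth noting that the paper itself gives no proof of this lemma --- it is imported wholesale from \cite{MR3367659} --- so the only fair comparison is with the machinery the paper builds around it. There your argument lines up exactly: the Mercer-type factorization $K_{b}\left(x,y\right)=\sum_{n}e_{n}\left(x\right)e_{n}\left(y\right)$ is precisely the infinite-rank version of the Gram factorization $K_{n}=L_{n}L_{n}^{tr}$ underlying (\ref{eq:b9}), and your binomial inversion $\left\langle e_{k},f\right\rangle _{\mathscr{H}}=\sum_{j=0}^{k}\left(-1\right)^{k-j}\binom{k}{j}f\left(j\right)=\left(\Delta^{k}f\right)\left(0\right)$ is identity (\ref{eq:b5}) in the guise of the inverse Pascal matrix (\ref{eq:b7}). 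Each individual step checks out: the membership criterion (\ref{eq:rh2}) with $C=1$ does give $e_{n}\in\mathscr{H}$; the finite expansion $K_{x}=\sum_{k=0}^{x}e_{k}\left(x\right)e_{k}$ holds in $\mathscr{H}$ because both sides agree pointwise on $V$; and the unipotent triangular system does force $\left\langle e_{k},e_{n}\right\rangle _{\mathscr{H}}=\delta_{kn}$, with completeness following from the density of $\mathrm{span}\left\{ K_{x}\right\} $.

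The one place you should add a line is the reverse implication in (\ref{eq:b4}). Parseval's identity only gives the forward direction; for the converse you must first say what $\left\langle e_{k},f\right\rangle _{\mathscr{H}}$ means for $f\notin\mathscr{H}$ (namely the finite-difference expression $\left(\Delta^{k}f\right)\left(0\right)$, which you only introduce in step (4)), and then verify that when $\sum_{k}\left|\left(\Delta^{k}f\right)\left(0\right)\right|^{2}<\infty$ the element $g:=\sum_{k}\left(\Delta^{k}f\right)\left(0\right)e_{k}\in\mathscr{H}$ actually coincides with $f$ pointwise. That last step is Newton's forward-difference formula $f\left(x\right)=\sum_{k=0}^{x}\left(\Delta^{k}f\right)\left(0\right)\binom{x}{k}$, i.e.\ the same binomial inversion you already invoke, so this is a matter of ordering and of one sentence of justification rather than a missing idea.
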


Fix $x_{1}\in V$, then we shall apply \lemref{b1} to the function
$f_{1}=\delta_{x_{1}}$ (in $\mathscr{F}unc\left(V\right)$). 
\begin{thm}
\label{thm:bino}We have 
\[
\left\Vert P_{F_{n}}\left(\delta_{x_{1}}\right)\right\Vert _{\mathscr{H}}^{2}=\sum_{k=x_{1}}^{n}\binom{k}{x_{1}}^{2}.
\]
\end{thm}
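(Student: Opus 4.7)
The plan is to exploit the orthonormal basis $\{e_k\}_{k\in V}$ of $\mathscr{H}$ provided by \lemref{b1}\,(ii) together with the projection formula (\ref{eq:b3}), reducing the norm computation to a sum of squared Fourier coefficients of $\delta_{x_1}$ in that basis. The bulk of the work is then the explicit evaluation of those coefficients by binomial inversion.

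First, because $\{e_k\}_{k=0}^{n}$ is an orthonormal family in $\mathscr{H}$ and $P_{F_n}(\delta_{x_1})=\sum_{k=0}^{n}\langle e_k,\delta_{x_1}\rangle_{\mathscr{H}}\,e_k$ by (\ref{eq:b3}), the Pythagorean identity gives
\[
\bigl\Vert P_{F_n}(\delta_{x_1})\bigr\Vert_{\mathscr{H}}^{2}=\sum_{k=0}^{n}\bigl|\langle e_k,\delta_{x_1}\rangle_{\mathscr{H}}\bigr|^{2}.
\]
So the problem collapses to determining the coefficients $c_k:=\langle e_k,\delta_{x_1}\rangle_{\mathscr{H}}$ in the sense in which (\ref{eq:b3}) is asserted to be well defined on all of $\mathscr{F}unc(V)$.

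Second, I would pin down $c_k$ by pointwise evaluation. For any $\varphi\in\mathscr{H}$ the ONB expansion and evaluation at $x\in V$ yield $\varphi(x)=\sum_{k=0}^{x}\langle e_k,\varphi\rangle_{\mathscr{H}}\binom{x}{k}$, using that $e_k(x)=\binom{x}{k}$ for $k\leq x$ and vanishes otherwise. Formally substituting $\varphi=\delta_{x_1}$ gives the triangular system
\[
\delta_{x_1}(x)=\sum_{k=0}^{x}c_k\binom{x}{k},\qquad x\in V.
\]
The matrix $\bigl(\binom{x}{k}\bigr)_{x,k\in V}$ is lower triangular with unit diagonal, and it is inverted by the classical binomial-inversion identity $\sum_{y=\ell}^{k}(-1)^{k-y}\binom{k}{y}\binom{y}{\ell}=\delta_{k\ell}$. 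Applying this inverse to $\delta_{x_1}$,
\[
c_k=\sum_{j=0}^{k}(-1)^{k-j}\binom{k}{j}\delta_{x_1}(j)=\begin{cases}(-1)^{k-x_1}\dbinom{k}{x_1}, & k\geq x_1,\\[4pt] 0, & k<x_1.\end{cases}
\]
Substituting $|c_k|^{2}=\binom{k}{x_1}^{2}$ for $k\geq x_1$ into the Pythagoras display produces
\[
\bigl\Vert P_{F_n}(\delta_{x_1})\bigr\Vert_{\mathscr{H}}^{2}=\sum_{k=x_1}^{n}\binom{k}{x_1}^{2},
\]
which is the claim.

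The main obstacle I anticipate is interpretational rather than computational: $\delta_{x_1}$ is a priori only in $\mathscr{F}unc(V)$, not necessarily in $\mathscr{H}$, so one must justify reading $\langle e_k,\delta_{x_1}\rangle_{\mathscr{H}}$ as the binomial-inversion coefficient above. This is exactly the extended meaning under which (\ref{eq:b3}) is well defined on all of $\mathscr{F}unc(V)$; consistency with the genuine $\mathscr{H}$-inner product may be checked on the test functions $f=e_\ell$, where it reduces to the binomial-inversion identity. As a sanity check, letting $n\to\infty$ the sum $\sum_{k\geq x_1}\binom{k}{x_1}^{2}$ diverges, confirming that $\delta_{x_1}\notin\mathscr{H}$ for the binomial kernel, which is precisely the dichotomy point the section is making against the discrete Brownian case of \subsecref{bm}.
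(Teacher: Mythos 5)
Your proof is correct, and it reaches the formula by a slightly different route than the paper. The paper's argument (carried out in \corref{bino}) works with the Gram matrix $K_{n}=K_{b}\big|_{F_{n}\times F_{n}}$: it factors $K_{n}=L_{n}L_{n}^{tr}$ with $L_{n}$ the truncated Pascal matrix, inverts $L_{n}$ by the binomial-inversion identity (\ref{eq:b5})--(\ref{eq:b7}), and reads off $\left\Vert P_{F_{n}}\left(\delta_{x_{1}}\right)\right\Vert _{\mathscr{H}}^{2}$ as the $\left(x_{1},x_{1}\right)$ diagonal entry of $K_{n}^{-1}=\left(L_{n}^{tr}\right)^{-1}\left(L_{n}\right)^{-1}$ via (\ref{eq:b11}), i.e., via the projection formula of \lemref{proj1}. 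You instead bypass $K_{n}$ and its inverse entirely, using Parseval in the ONB $\left\{ e_{k}\right\} $ of \lemref{b1} and computing the extended Fourier coefficients $c_{k}=\left\langle e_{k},\delta_{x_{1}}\right\rangle _{\mathscr{H}}$ by the same binomial inversion; since $\left(K_{n}^{-1}\right)_{x_{1}x_{1}}=\sum_{k}\left|\left(L_{n}^{-1}\right)_{kx_{1}}\right|^{2}$ and $c_{k}=\left(L_{n}^{-1}\right)_{kx_{1}}$, the two computations are algebraically the same sum of squares, just organized differently. Your packaging is arguably cleaner, and you correctly flag the one point needing care in either version: $\delta_{x_{1}}\notin\mathscr{H}$, so $\left\langle e_{k},\delta_{x_{1}}\right\rangle _{\mathscr{H}}$ must be interpreted through the extension of (\ref{eq:b3}) to $\mathscr{F}unc\left(V\right)$. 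This is legitimate because each $e_{k}$ is a \emph{finite} linear combination of kernel functions, $e_{k}=\sum_{x=0}^{k}\left(-1\right)^{k-x}\binom{k}{x}K_{x}$, so $\left\langle e_{k},f\right\rangle _{\mathscr{H}}:=\sum_{x=0}^{k}\left(-1\right)^{k-x}\binom{k}{x}f\left(x\right)$ is well defined for every $f\in\mathscr{F}unc\left(V\right)$ and agrees with your triangular-system solution; making that one sentence explicit would close the only interpretational gap in your write-up.
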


The proof of the theorem will be subdivided in steps; see below. 
\begin{lem}[\cite{MR3367659}]
~

\begin{enumerate}
\item \label{enu:b1}For $\forall m,n\in V$, such that $m\leq n$, we have
\begin{equation}
\delta_{m,n}=\sum_{j=m}^{n}\left(-1\right)^{m+j}\binom{n}{j}\binom{j}{m}.\label{eq:b5}
\end{equation}
\item \label{enu:b2}For all $n\in\mathbb{Z}_{+}$, the inverse of the following
lower triangle matrix is this: With (see Figure \ref{fig:L}) 
\begin{equation}
L_{xy}^{\left(n\right)}=\begin{cases}
\binom{x}{y} & \text{if \ensuremath{y\leq x\leq n}}\\
0 & \text{if \ensuremath{x<y}}
\end{cases}\label{eq:b6}
\end{equation}
 we have:
\begin{equation}
\left(L^{\left(n\right)}\right)_{xy}^{-1}=\begin{cases}
\left(-1\right)^{x-y}\binom{x}{y} & \text{if \ensuremath{y\leq x\leq n}}\\
0 & \text{if \ensuremath{x<y}}.
\end{cases}\label{eq:b7}
\end{equation}
 
\end{enumerate}
Notation: The numbers in (\ref{eq:b7}) are the entries of the matrix
$\left(L^{\left(n\right)}\right)^{-1}$. 

\end{lem}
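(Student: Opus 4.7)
The plan is to work in the orthonormal basis $\{e_k\}_{k\in V}$ supplied by \lemref{b1}, turn the projection into a finite interpolation problem, and read off the norm via Parseval together with the explicit matrix inverse in (\ref{eq:b7}).

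First I would show that the subspace $\mathscr{H}_{F_n}=\mathrm{span}\{K_x\}_{x\in F_n}$ coincides with $\mathrm{span}\{e_0,\ldots,e_n\}$. Since $\{e_k\}$ is an ONB and the reproducing property gives $\langle e_k,K_x\rangle_{\mathscr H}=e_k(x)=\binom{x}{k}$, the expansion
\[
K_x=\sum_{k=0}^{x}\binom{x}{k}e_k,\qquad x\in F_n,
\]
shows $\mathscr{H}_{F_n}\subseteq\mathrm{span}\{e_0,\ldots,e_n\}$; the transition matrix is exactly $L^{(n)}$ from (\ref{eq:b6}), which is lower triangular with $1$'s on the diagonal, hence invertible, giving equality of two $(n+1)$-dimensional subspaces.

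Next, I would characterize $g:=P_{F_n}(\delta_{x_1})$ as the unique element of $\mathscr{H}_{F_n}$ with $g(x)=\delta_{x_1,x}$ for every $x\in F_n$; indeed this is equivalent to $\langle K_x,g-\delta_{x_1}\rangle_{\mathscr H}=0$ for all $x\in F_n$, using only the values of $\delta_{x_1}$ on $F_n$ (so the argument is insensitive to whether $\delta_{x_1}\in\mathscr{H}$, matching the extended meaning of (\ref{eq:b3}) in \lemref{b1}(iv)). Writing $g=\sum_{k=0}^{n}c_k e_k$ and evaluating at each $x\in F_n$ converts the interpolation conditions into the linear system
\[
\sum_{k=0}^{n}c_k\binom{x}{k}=\delta_{x_1,x},\qquad x\in\{0,1,\ldots,n\},
\]
that is, $L^{(n)}c=\delta_{x_1}$ as vectors in $\mathbb{R}^{n+1}$.

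Now I invoke the inversion formula (\ref{eq:b7}) from \lemref{b1}(\ref{enu:b2}) to obtain
\[
c_k=\bigl(L^{(n)}\bigr)^{-1}_{k,x_1}=\begin{cases}(-1)^{k-x_1}\binom{k}{x_1} & \text{if }x_1\leq k\leq n,\\ 0 & \text{otherwise},\end{cases}
\]
and conclude by Parseval (orthonormality of $\{e_k\}$ in $\mathscr{H}$) that
\[
\bigl\Vert P_{F_n}(\delta_{x_1})\bigr\Vert_{\mathscr H}^{2}=\sum_{k=0}^{n}c_k^{\,2}=\sum_{k=x_1}^{n}\binom{k}{x_1}^{2}.
\]
The one substantive input is the explicit inverse of $L^{(n)}$ (equivalent to the alternating binomial identity (\ref{eq:b5})), which is the forthcoming combinatorial lemma. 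Everything else is a short linear-algebra computation inside the finite-dimensional subspace $\mathscr{H}_{F_n}$; the main conceptual point is the change of basis from the reproducing vectors $\{K_x\}_{x\in F_n}$ to the ONB vectors $\{e_k\}_{k=0}^{n}$, which is exactly what diagonalizes the norm computation.
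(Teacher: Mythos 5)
Your proposal does not prove the statement it was assigned. The lemma in question is the combinatorial one: the alternating-sign identity (\ref{eq:b5}), together with the assertion that the lower-triangular matrix with entries $(-1)^{x-y}\binom{x}{y}$ is the inverse of the truncated Pascal matrix $L^{(n)}$ of (\ref{eq:b6}). What you wrote is instead a proof of \thmref{bino}, the formula for $\left\Vert P_{F_{n}}(\delta_{x_{1}})\right\Vert_{\mathscr{H}}^{2}$, and at the decisive step you \emph{invoke} the inversion formula (\ref{eq:b7}), explicitly calling it ``the forthcoming combinatorial lemma'' and treating it as your one substantive input. So neither (\ref{eq:b5}) nor (\ref{eq:b7}) is established anywhere in the proposal; relative to the assigned statement the argument is circular.

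The missing content is short. For (\ref{eq:b5}), the subset-of-a-subset identity $\binom{n}{j}\binom{j}{m}=\binom{n}{m}\binom{n-m}{j-m}$ gives
\[
\sum_{j=m}^{n}\left(-1\right)^{m+j}\binom{n}{j}\binom{j}{m}
=\binom{n}{m}\sum_{i=0}^{n-m}\left(-1\right)^{i}\binom{n-m}{i}
=\binom{n}{m}\left(1-1\right)^{n-m}=\delta_{m,n}.
\]
For (\ref{eq:b7}), let $M$ be the lower-triangular matrix with $M_{xy}=(-1)^{x-y}\binom{x}{y}$ for $y\leq x\leq n$ and $M_{xy}=0$ for $x<y$. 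Then for $y\leq x$,
\[
\bigl(L^{\left(n\right)}M\bigr)_{xy}=\sum_{j=y}^{x}\binom{x}{j}\left(-1\right)^{j-y}\binom{j}{y}
=\sum_{j=y}^{x}\left(-1\right)^{y+j}\binom{x}{j}\binom{j}{y}=\delta_{x,y}
\]
by (\ref{eq:b5}) applied with $m=y$, $n=x$; hence $L^{(n)}M=I$ and $M=(L^{(n)})^{-1}$. This reduction is exactly the paper's own (one-line) proof that part (ii) follows from part (i). Your linear algebra --- identifying $\mathscr{H}_{F_{n}}$ with $\mathrm{span}\{e_{0},\ldots,e_{n}\}$, solving $L^{(n)}c=\delta_{x_{1}}$, and applying Parseval --- is correct and is essentially the content of \thmref{bino} and Corollary \ref{cor:bino}, but it belongs to a different statement and cannot substitute for a proof of this one.
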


\begin{proof}
In rough outline, (\ref{enu:b2}) follows from (\ref{enu:b1}).
\end{proof}
\begin{figure}[H]
\[
L^{\left(n\right)}=\begin{bmatrix}1 & 0 & 0 & 0 & \cdots & \cdots & 0 & \cdots & 0 & 0\\
1 & 1 & 0 & 0 & \cdots & \cdots & 0 & \cdots & 0 & 0\\
1 & 2 & 1 & 0 &  &  & \vdots &  & \vdots & \vdots\\
1 & 3 & 3 & 1 & \ddots &  & \vdots &  & \vdots & \vdots\\
\vdots & \vdots & \vdots & \vdots & \ddots &  & \vdots &  & \vdots & \vdots\\
\vdots & \vdots & \vdots & \vdots &  & 1 & 0 &  & \vdots & \vdots\\
1 & \cdots & \binom{x}{y} & \binom{x}{y+1} & \cdots & * & 1 & \ddots & \vdots & \vdots\\
\vdots & \vdots & \vdots & \vdots &  &  &  & \ddots & 0 & \vdots\\
\vdots & \vdots & \vdots & \vdots &  &  &  &  & 1 & 0\\
1 & \cdots & \binom{n}{y} & \binom{n}{y+1} & \cdots & \cdots & \cdots & \cdots & n & 1
\end{bmatrix}
\]

\caption{\label{fig:L}The matrix $L_{n}$ is simply a truncated Pascal triangle,
arranged to fit into a lower triangular matrix.}
\end{figure}
\begin{cor}
\label{cor:bino}Let $K_{b}$, $\mathscr{H}$, and $n\in\mathbb{Z}_{+}$
be as above with the lower triangle matrix $L_{n}$. Set 
\begin{equation}
K_{n}\left(x,y\right)=K_{b}\left(x,y\right),\quad\left(x,y\right)\in F_{n}\times F_{n},\label{eq:b8}
\end{equation}
i.e., an $\left(n+1\right)\times\left(n+1\right)$ matrix. 

\begin{enumerate}
\item Then $K_{n}$ is invertible with 
\begin{equation}
K_{n}^{-1}=\left(L_{n}^{tr}\right)^{-1}\left(L_{n}\right)^{-1};\label{eq:b9}
\end{equation}
an $(\text{upper triangle})\times(\text{lower triangle})$ factorization. 
\item For the diagonal entries in the $\left(n+1\right)\times\left(n+1\right)$
matrix $K_{n}^{-1}$, we have:
\[
\left\langle x,K_{n}^{-1}x\right\rangle _{l^{2}}=\sum_{k=x}^{n}\binom{k}{x}^{2}
\]
\end{enumerate}
Conclusion\textbf{:} Since 
\begin{equation}
\left\Vert P_{F_{n}}\left(\delta_{x_{1}}\right)\right\Vert _{\mathscr{H}}^{2}=\left\langle x_{1},K_{n}^{-1}x_{1}\right\rangle _{\mathscr{H}}\label{eq:b11}
\end{equation}
for all $x_{1}\in F_{n}$, we get 
\begin{align}
\left\Vert P_{F_{n}}\left(\delta_{x_{1}}\right)\right\Vert _{\mathscr{H}}^{2} & =\sum_{k=x_{1}}^{n}\binom{k}{x_{1}}^{2}\nonumber \\
 & =1+\binom{x_{1}+1}{x_{1}}^{2}+\binom{x_{1}+2}{x_{1}}^{2}+\cdots+\binom{n}{x_{1}}^{2};\label{eq:b12}
\end{align}
and therefore, 
\[
\left\Vert \delta_{x_{1}}\right\Vert _{\mathscr{H}}^{2}=\sum_{k=x_{1}}^{\infty}\binom{k}{x_{1}}^{2}=\infty.
\]
In other words, no $\delta_{x}$ is in $\mathscr{H}$.
\end{cor}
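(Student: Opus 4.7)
\medskip

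\noindent\emph{Proof proposal.} The plan is to recognize $K_n$ as a Gram matrix of the truncated binomial table $L_n$, and then read off everything from the inverse of $L_n$ supplied in the previous lemma. Concretely, the defining identity
\[
K_b(x,y)=\sum_{k=0}^{x\wedge y}\binom{x}{k}\binom{y}{k}
\]
is exactly the $(x,y)$-entry of $L_n L_n^{tr}$, since $(L_n)_{xk}=\binom{x}{k}$ vanishes for $k>x$ and similarly for $y$. So the first step is to write $K_n=L_n L_n^{tr}$ on $F_n\times F_n$. Because $L_n$ is lower triangular with $1$'s on the diagonal, it is automatically invertible; inverting the factorization gives (1), namely $K_n^{-1}=(L_n^{tr})^{-1}L_n^{-1}$, which is the desired upper-times-lower triangle decomposition.

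For (2), I would compute the diagonal entry by moving one factor across the inner product. Let $e_{x_1}\in\mathbb{R}^{F_n}$ denote the standard unit vector at $x_1$. Since $(L_n^{tr})^{-1}=(L_n^{-1})^{tr}$,
\[
\langle e_{x_1},K_n^{-1}e_{x_1}\rangle_{l^2}
=\langle e_{x_1},(L_n^{-1})^{tr}L_n^{-1}e_{x_1}\rangle_{l^2}
=\bigl\|L_n^{-1}e_{x_1}\bigr\|_{l^2}^{2}.
\]
Using the explicit formula $(L_n^{-1})_{yx_1}=(-1)^{y-x_1}\binom{y}{x_1}$ for $x_1\le y\le n$ from the preceding lemma, the $y$-th coordinate of $L_n^{-1}e_{x_1}$ is $(-1)^{y-x_1}\binom{y}{x_1}$, and squaring and summing gives $\sum_{k=x_1}^{n}\binom{k}{x_1}^2$, as claimed.

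Finally, to pass from (2) to the conclusion on $\|\delta_{x_1}\|_{\mathscr{H}}^{2}$, I would invoke \lemref{proj1}. That lemma says $P_{F_n}(\delta_{x_1})=\sum_{y\in F_n}(K_n^{-1}\delta_{x_1})(y)K_y$; taking $\mathscr{H}$-norms and using the reproducing identity $\langle K_y,K_z\rangle_{\mathscr{H}}=K(y,z)$ collapses the double sum to
\[
\|P_{F_n}(\delta_{x_1})\|_{\mathscr{H}}^{2}
=\langle K_n^{-1}\delta_{x_1},K_n K_n^{-1}\delta_{x_1}\rangle_{l^2}
=(K_n^{-1}\delta_{x_1})(x_1),
\]
which by (2) equals $\sum_{k=x_1}^{n}\binom{k}{x_1}^2$. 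Letting $n\to\infty$, the sum diverges, so $\sup_{F\in\mathscr{F}(V)}(K_F^{-1}\delta_{x_1})(x_1)=+\infty$, and condition (iii) of \thmref{del} fails; hence $\delta_{x_1}\notin\mathscr{H}$ for every $x_1\in V$.

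The only place where care is needed is the combinatorial identity, i.e.\ verifying that the Vandermonde/Pascal factorization $K_n=L_nL_n^{tr}$ really is the same sum that defines $K_b$; once that observation is in hand, the rest is linear algebra plus the projection formula of \lemref{proj1}, and there is no analytic obstacle. The ``hard part'' is thus almost purely notational: keeping the two roles of $e$ (the standard $l^2$-basis versus the RKHS ONB $\{e_n\}$ of \lemref{b1}) distinct while transporting the calculation between $l^2(F_n)$ and $\mathscr{H}$.
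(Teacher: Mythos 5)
Your proposal is correct and follows essentially the same route as the paper: the Cholesky-type factorization $K_n=L_nL_n^{tr}$ of the binomial Gram matrix, the explicit Pascal-matrix inverse from the preceding lemma to evaluate the diagonal of $K_n^{-1}$, and the failure of condition (\ref{enu:d3}) of \thmref{del} as $n\to\infty$. You in fact supply details the paper leaves implicit (the verification that $K_b$ is the Gram matrix of $L_n$, and the identity $\langle e_{x_1},K_n^{-1}e_{x_1}\rangle=\|L_n^{-1}e_{x_1}\|^2$), so nothing further is needed.
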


\begin{acknowledgement*}
The co-authors thank the following colleagues for helpful and enlightening
discussions: Professors Daniel Alpay, Sergii Bezuglyi, Ilwoo Cho,
Myung-Sin Song, Wayne Polyzou, and members in the Math Physics seminar
at The University of Iowa.
\end{acknowledgement*}
\bibliographystyle{amsalpha}
\bibliography{ref}

\end{document}